\newtheorem*{rep@theorem}{\rep@title}
\newcommand{\newreptheorem}[2]{%
\newenvironment{rep#1}[1]{%
 \def\rep@title{#2 \ref{##1}}%
 \begin{rep@theorem}}%
 {\end{rep@theorem}}}
\newtheorem{theorem}{Theorem}[section]
\newtheorem{lemma}[theorem]{Lemma}
\newtheorem{proposition}[theorem]{Proposition}
\newtheorem{property}[theorem]{Property}
\newtheorem{corollary}[theorem]{Corollary}
\newtheorem{question}[theorem]{Question}
\theoremstyle{definition}
\newtheorem{definition}[theorem]{Definition}
\newtheorem{example}[theorem]{Example}
\newcommand{\os}{Ozsv\'ath and Szab\'o}
\newcommand{\spinc}{\text{Spin}^c}
\newcommand{\relspinc}{\underline{\mathrm{Spin}^c}}
\newcommand{\mft}{\mathfrak{t}}
\newcommand{\mfv}{\mathfrak{v}}
\newcommand{\mfh}{\mathfrak{h}}
\newcommand{\Z}{\mathbb{Z}}
\newcommand{\F}{\mathbb{F}}
\newcommand{\X}{\mathbb{X}}
\newcommand{\T}{\mathcal{T}}
\newcommand{\CFhat}{\widehat{CF}}
\newcommand{\Xhat}{\widehat{\X}}
\newcommand{\Ahat}{\widehat{A}}
\newcommand{\Bhat}{\widehat{B}}
\newcommand{\vhat}{\widehat{v}}
\newcommand{\hhat}{\widehat{h}}
\newcommand{\Fhat}{\widehat{F}}
\newcommand{\Aplus}{A^+}
\newcommand{\Bplus}{B^+}
\newcommand{\vplus}{v^+}
\newcommand{\hplus}{h^+}
\title[Band surgery and the signature of a knot]{A note on band surgery and the signature of a knot}
\author{Allison H. Moore}
\author{Mariel Vazquez}
\address{Allison H. Moore\\ Department of Mathematics \& Applied Mathematics \\ 
   Virginia Commonwealth University \\Richmond, VA 23284 \\ USA  
   }
\address{Mariel Vazquez\\  Department of Mathematics, Department of Microbiology and Molecular Genetics \\
   University of California, Davis \\ Davis, CA 95616 \\ USA   
   }
\subjclass{57K10, 57K18 (primary), 57R58, 57M12 (secondary)}
\begin{document}

\maketitle

\begin{abstract} 
Band surgery is an operation relating pairs of knots or links in the three-sphere. We prove that if two quasi-alternating knots $K$ and $K'$ of the same square-free determinant are related by a band surgery, then the absolute value of the difference in their signatures is either 0 or 8. This obstruction follows from a more general theorem about the difference in the Heegaard Floer $d$-invariants for pairs of L-spaces that are related by distance one Dehn fillings and satisfy a certain condition in first homology. These results imply that $T(2, 5)$ is the only torus knot $T(2, m)$ with $m$ square-free that admits a chirally cosmetic banding, i.e. a band surgery operation to its mirror image. We conclude with a discussion on the scarcity of chirally cosmetic bandings.
\end{abstract}

\section{Introduction}
Band surgery is an operation on knots or links in the three-sphere. Let $L$ be a link, and let $b:I \times I \rightarrow S^3$ be an embedding of the unit square with $L\cap b(I\times I) = b(I\times \partial I)$. Let $L'$ denote the link obtained by replacing $b(I\times \partial I)$ in $L$ with $b(\partial I\times I)$. We say that the link $L'$ results from \emph{band surgery} along $L$. See Figure \ref{fig:567} for two examples of band surgeries relating pairs of knots. When $L$ and $L'$ are oriented links and band surgery respects their orientations, the operation is called \emph{coherent} band surgery. Otherwise, it is called \emph{non-coherent}. Coherent band surgery converts a knot into a two-component link, whereas non-coherent band surgery converts a knot to another knot.

In this article we are concerned only with non-coherent band surgery operations relating unoriented knots. Elsewhere in the literature non-coherent band surgery is called \emph{incoherent} band surgery or \emph{unoriented banding} (e.g. \cite{AK, Kanenobu, IJ}). Band surgery is of interest in low-dimensional topology, in particular in knot theory, and in the study of surfaces in four-manifolds. It is of independent interest in DNA topology; for a more detailed commentary on this perspective, we refer the reader to \cite[Section 5]{LMV} and \cite{MV}, which review the relevance of band surgery to the study of DNA.

In Theorem \ref{thm:sigdif}, we find a new obstruction to band surgery for quasi-alternating knots via two knot invariants, the determinant and signature of a knot. Both the determinant $\det(K)$ and the signature $\sigma(K)$ of a knot are integer-valued ($\det(K)$ is odd and $\sigma(K)$ is even) and are determined by the Seifert module \cite{GL, Trotter}. A quasi-alternating knot is a generalization of an alternating knot due to \os~\cite{OS:Branched}. All alternating knots are quasi-alternating. Two pairs of knots for which Theorem \ref{thm:sigdif} applies are shown in Figure \ref{fig:567}. Note that any banding relating a pair of knots is necessarily a non-coherent band surgery.

{
\renewcommand{\thetheorem}{\ref{thm:sigdif}}
\begin{theorem}
Let $K$ and $K'$ be a pair of quasi-alternating knots and suppose that $\det(K) = m = \det(K')$ for some square-free integer $m$. If there exists a band surgery relating $K$ and $K'$, then $|\sigma(K) - \sigma(K')|$ is $0$ or $8$. 
\end{theorem}
\addtocounter{theorem}{-1}
}

\begin{figure}
\includegraphics[height=0.23\textwidth]{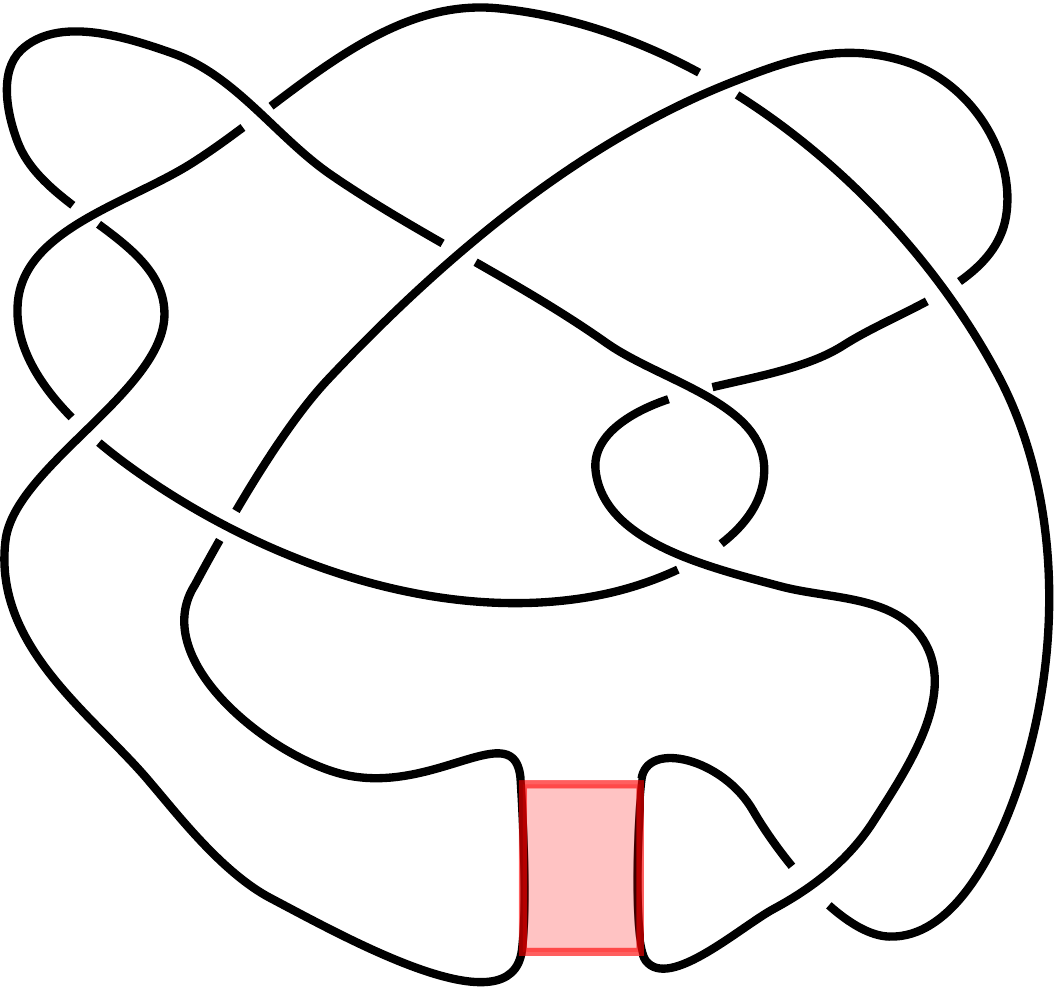}
\qquad  \qquad  \qquad
\includegraphics[height=0.23\textwidth]{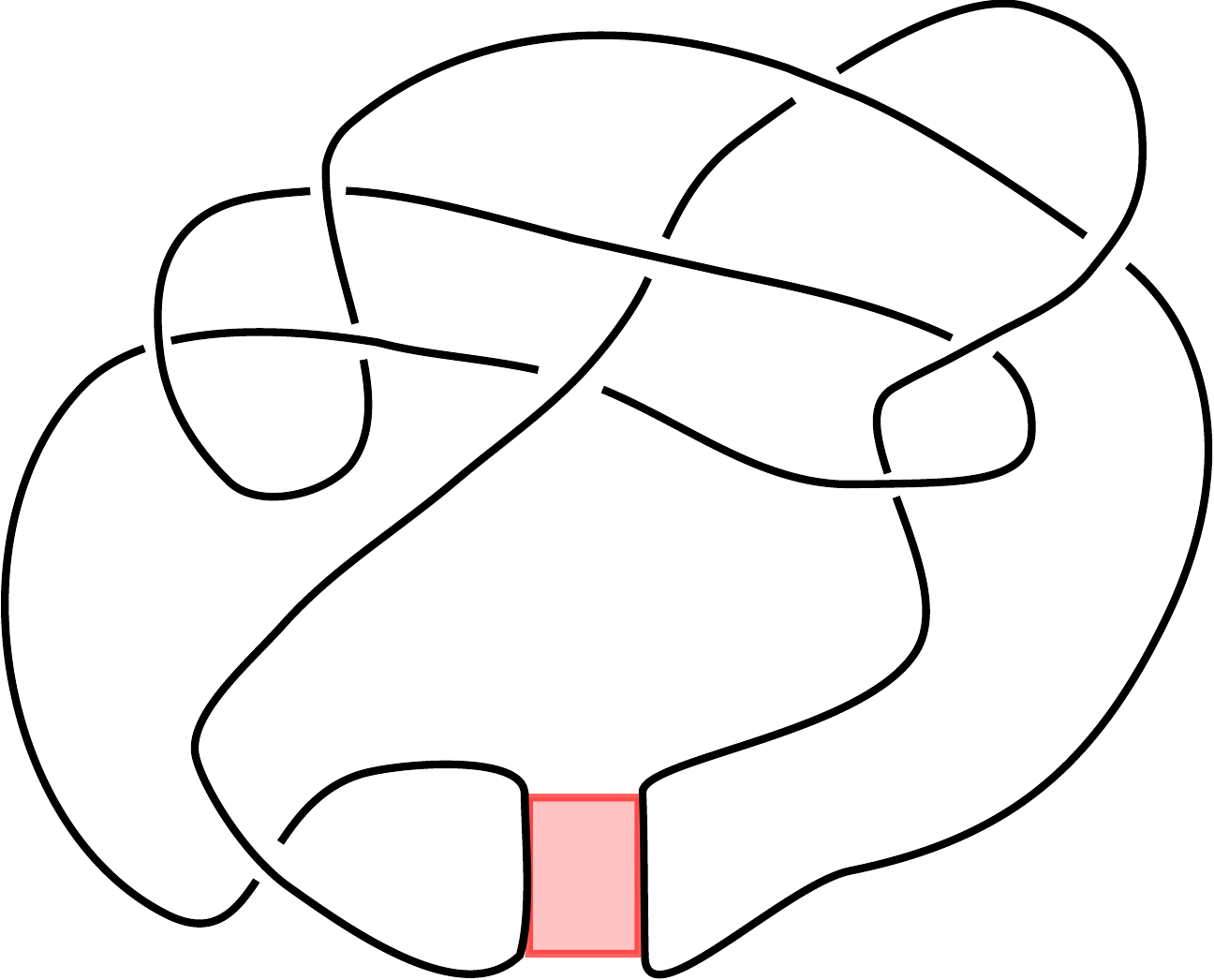}
\caption{Non-coherent band surgery (indicated by the shaded area) relates the knots $7_1$ and $5_2^*$ (left), and the knots $6_2$ and $7_2$ (right). The knot $5_2^*$ is the mirror of $5_2$. Diagrams corresponding with these band moves were found by Zekovi\'c \cite{Zekovic}. In contrast, Theorem \ref{thm:sigdif} implies there cannot exist a band surgery between $7_1$ and $5_2$, since $\det(7_1) = 7=\det(5_2)$ and $|\sigma(7_1)-\sigma(5_2)| =4$. Likewise, there is no band surgery between $6_2$ and $7_2^*$. See example \ref{example:knots} for details.}
\label{fig:567}
\end{figure}

The determinant of a knot, or more generally the first homology of its branched double cover, can often provide an effective obstruction to the existence of a band surgery relating a pair of knots (see for example \cite{AK, KM, Kanenobu}). 
However, when a pair of knots $K, K'$ have branched double covers with isomorphic first homology (which implies $\det(K) = \det(K')$), such obstructions do not apply. 
Theorem \ref{thm:sigdif} fills this gap in the case where $m$ is square-free. 
Theorem \ref{thm:sigdif} is reminiscent of Murasugi's well-known statement that $|\sigma(L) - \sigma(L')|\leq 1$ when $L$ and $L'$ are related by a coherent band surgery \cite[Lemma 7.1]{Murasugi}. 
In the case of non-coherent band surgery, the signature of a knot may change by an arbitrary amount. 
For example, the torus knot $T(2, m)$, with $m>0$ odd, has signature $1-m$ and is related by a single band surgery operation to the unknot, which has zero signature. 

Murasugi also proved that for any knot $K$, the signature controls the determinant of a knot modulo 4 \cite[Theorem 5.6]{Murasugi}. 
More precisely, if $|\sigma(K)| \equiv 0$ (resp. 2) modulo 4, then $|\det(K)|\equiv 1$ (resp. 3) modulo 4. This immediately implies
\[
	|\sigma(K) - \sigma(K')| \equiv |\det(K) - \det(K')| \pmod{4},
\]
for \emph{any} pair of knots, which provides an explanation for the significance of the numbers 0 and 8 in Theorem \ref{thm:sigdif}. 

Theorem \ref{thm:sigdif} follows as a corollary of Theorem \ref{thm:main}, a more general statement about the Heegaard Floer $d$-invariants of pairs of L-spaces related by integral surgery.   

{
\renewcommand{\thetheorem}{\ref{thm:main}}
\begin{theorem}
Let $Y$ and $Y'$ be L-spaces with $H_1(Y) \cong \Z/d_1 \oplus \Z/d_2\oplus \cdots \oplus \Z/d_k \cong H_1(Y')$, where each $d_i$ is an odd square-free integer. If $Y'$ is obtained by a distance one surgery along any knot $K$ in $Y$, then
\[
	| d(Y, \mft) - d(Y', \mft') | = 2 \text{ or } 0
\]
where $\mft$ and $\mft'$ denote the unique self-conjugate $\spinc$ structures on $Y$ and $Y'$.
\end{theorem}
\addtocounter{theorem}{-1}
}

Heegaard Floer homology is a powerful package of three and four-manifold invariants due to \os~\cite{OS:Absolutely}. 
An \emph{L-space} is a rational homology sphere whose Heegaard Floer homology is as simple as possible. One particularly useful Heegaard Floer invariant is the \emph{d-invariant} of the pair $(Y,\mft)$, where $Y$ is an oriented rational homology sphere and $t$ is an element of $\spinc (Y )\cong H^2 (Y; \Z)$. The $d$-invariant is a rational number, defined as the minimal grading of a certain submodule of the Heegaard Floer module $HF^+(Y, \mft)$ \cite{OS:Absolutely}. The reader is referred to \cite{OS:Absolutely} for an introduction to these invariants. All required background for this note is in sections \ref{sec:prelims} and \ref{sec:hf}.

The \emph{distance} between two Dehn surgery slopes refers to their minimal geometric intersection number. 
A surgery slope that intersects the meridian of a knot exactly once is called a \emph{distance one surgery}, or an \emph{integral} surgery. 
Given two knots or links related by band surgery, the Montesinos trick \cite{Montesinos} implies that their branched double covers may be obtained by distance one Dehn fillings of a three-manifold with torus boundary. 
To prove Theorem \ref{thm:main} we apply the Heegaard Floer mapping cone formula of \os~\cite{OS:Rational}, and a theorem of Ni and Wu \cite[Proposition 1.6]{NiWu:Cosmetic} which describes the $d$-invariants of a manifold obtained by integral surgery along a null-homologous knot in an L-space in terms of certain integer-valued knot invariants due to Rasmussen \cite{Rasmussen:Thesis}.  

\begin{figure}
\centering
\includegraphics[height=3.3cm]{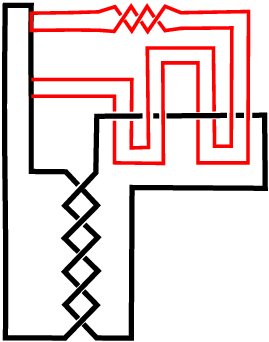}
\qquad \qquad  \qquad
\includegraphics[height=3.7cm]{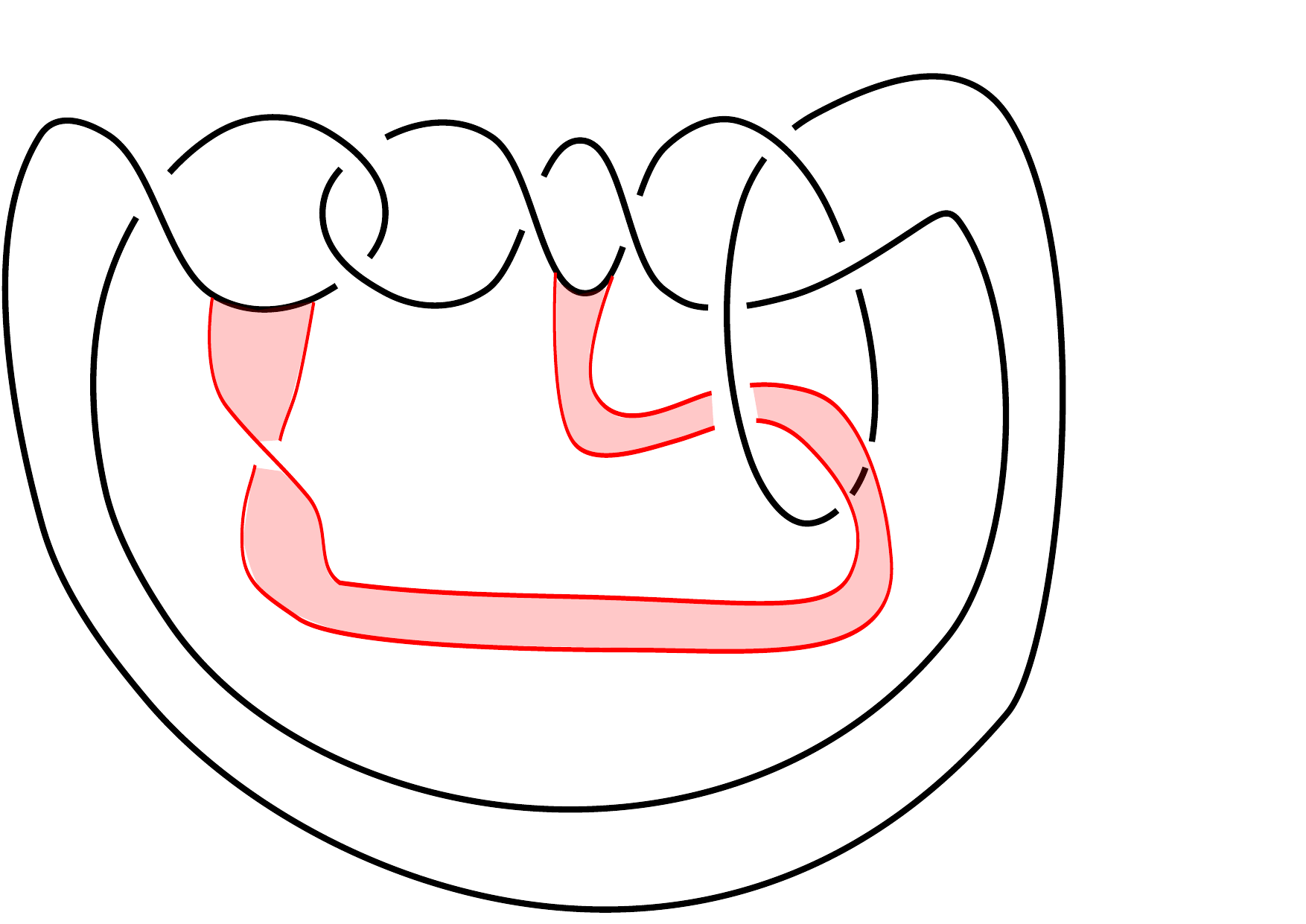}
\caption{(Left) A banding relating the torus knot $T(2, 5) = 5_1$ to its mirror $T(2, -5) = 5_1^*$. The existence of this band move is due to Zekovi\'c \cite{Zekovic}. 
(Right) A banding relating the pair $8_8^*$ and $8_8$. This band was discovered via computer simulations (see section \ref{sec:chiral}).}
\label{fig:chiralexamples}
\end{figure}

\textbf{Applications to chirally cosmetic banding.} A special case of Theorem \ref{thm:sigdif} occurs when a band surgery relates a knot $K$ with its mirror image $K^*$. 
See Figure \ref{fig:chiralexamples} for an example. 
This is called a \emph{chirally cosmetic banding}. 
The following is an immediate consequence of Theorem \ref{thm:sigdif}. 
{
\renewcommand{\thetheorem}{\ref{cor:torus}}
\begin{corollary}
The only nontrivial torus knot $T(2, m)$, with $m$ square free, admitting a chirally cosmetic banding is $T(2, 5)$.
\end{corollary}
\addtocounter{theorem}{-1}
}

In section \ref{sec:band}, we compare the square-free condition of Theorem \ref{thm:sigdif} with the constructive examples of chirally cosmetic bandings given in \cite{IJ}. 
Using Monte Carlo simulations we find examples of knots admitting chirally cosmetic bandings. Additionally, in all cases these appear with low probability.
Amongst knots of up to eight crossings, the results of our simulations produce three such examples: $5_1$, $8_8$, and $8_{20}$. The chirally cosmetic banding along $8_8$ was previously not known to exist. An isotopy of the banding is shown in Figure \ref{fig:chiralexamples} (right). 

\textbf{Organization.} In section \ref{sec:prelims} we introduce notation and establish some prerequisite topological information. Section \ref{sec:hf} contains the necessary background in Heegaard Floer homology along with the proof of Theorem \ref{thm:main}. In section \ref{sec:band} we prove Theorem \ref{thm:sigdif}, further consider the case of chirally cosmetic pairs and detail the methods of the computer simulations. 

\section{Preliminaries}
\label{sec:prelims}

\subsection{Homological preliminaries}
In this section we state some homological preliminaries and set notation. We assume all singular (co)homology groups have coefficients in $\Z$. Let $K$ be a knot in a rational homology sphere $Y$, and let $M=Y-N(K)$ denote the complement of $K$ in $Y$. A \emph{slope} is an isotopy class of an unoriented simple closed curve on the boundary of $M$. The \emph{Dehn filling} $M(\eta)$ is the closed, oriented three-manifold obtained by gluing a solid torus $V$ to $M$ by a homeomorphism which identifies a meridian of $\partial V$ to the slope $\eta$ on $\partial M$. The \emph{distance} between two surgery slopes on $\partial M$ is the minimal geometric intersection number of the two curves, denoted $\Delta(\eta, \nu)$ for any pair of slopes $\eta, \nu$. In this note we are particularly concerned with surgery slopes which intersect the meridian of $K$ exactly once, i.e. \emph{distance one surgeries}, or \emph{integral} surgeries. 

When $K$ is null-homologous, there is a standard choice of a meridian $\mu$ and Seifert longitude $\lambda$ on the bounding torus $\partial M$. Here, the meridian $\mu$ is a simple closed curve on $\partial M = \partial N(K)$ that bounds a disk in the solid torus $N(K)$. The preferred longitude is the slope determined by the intersection of a Seifert surface for the knot and $\partial N(K)$. A slope $\eta$ may be written in terms of this standard basis as $\eta=a\mu+b\lambda$. Rational Dehn surgery with the surgery coefficient $a/b$ along $K$ in $Y$ is then well-defined, and the result of surgery is denoted by $Y_{a/b}(K) = M(\eta)$. Indeed, when $K$ is null-homologous a homological argument (see for example \cite[Lemma 8.1]{Gainullin}) shows that $H_1(M)\cong \Z \oplus H_1(Y)$, where the $\Z$-summand is generated by the meridian of $K$. This implies that $H_1(Y_{a/b}(K) ) \cong \Z/a  \oplus H_1(Y)$ by a standard Mayer-Vietoris argument. Notice that the Seifert longitude controls the first homology of the Dehn filling, in the sense that $|H_1(Y_{a/b}(K))| = |M(\eta)| = |H_1(Y)|\cdot \Delta(\eta, \lambda)$. 

When $K$ is not null-homologous, we have that $H_1(M)\cong \Z \oplus H$ for some finite group $H$.   Let $i_*$ denote the map induced by inclusion in the long exact sequence of the pair $(M, \partial M)$,
\begin{equation*}
\cdots \rightarrow H_2(M) \rightarrow H_2(M,\partial M)\rightarrow H_1(\partial M)\stackrel{i_*}{\rightarrow} H_1(M) \rightarrow H_1(M, \partial M) \rightarrow \cdots.
\end{equation*}
The kernel and image of $i_*$ are both rank one. In particular, the kernel of $i_*$ is generated by $k\lambda_M$, where $\lambda_M$ is a primitive class $[\lambda_M]$ in $H_1(\partial M)$ uniquely determined up to sign. This class determines a well-defined slope in $\partial M$:
\begin{definition}
The \emph{rational longitude} $\lambda_M$ is the unique slope in $\partial M$ characterized by the property that its image $i_*(\lambda_M)$ is of finite order in $H\subset H_1(M)$. 
\end{definition}
When we work with knots that are non-trivial in $H_1(Y)$, we will use the rational longitude to fix a basis $(\mu,\lambda_M)$ for $H_1(\partial M)$, where $\mu$ now denotes some slope with $\Delta(\mu, \lambda_M)=1$. Just as with the case of the Seifert longitude for null-homologous knots, the rational longitude also controls the order of the first homology of the filling. 
\begin{lemma}\cite[Lemma 3.2]{Watson}
\label{lem:watson}
For any filling slope $\eta\neq\lambda_M$ on $\partial M$, 
\begin{equation}
\label{eqn:order}
	|H_1(M(\eta)| = \operatorname{ord}_H i_*(\lambda_M)\cdot|H| \cdot \Delta(\eta, \lambda_M)
\end{equation}
where $\operatorname{ord}_H i_*(\lambda_M)$ denotes the (necessarily finite) order of the rational longitude in $H$. 
\end{lemma}
In \cite[Lemma 3.2]{Watson}, this is proved with a careful analysis of the long exact sequence of the pair. Under the homomorphism $i_*:H_1(\partial M)\rightarrow H_1(M)$, the basis elements $\mu$ and $\lambda_M$ are mapped to $(\ell, u)$ and $(0, h)$, respectively, where $|\ell|=\operatorname{ord}_H i_*(\lambda_M)$, and $u,h$ are some elements of $H$. 

Let us introduce some notation. Consider a decomposition of the finite abelian group $H$ as $\Z/r_1\oplus \cdots \oplus \Z/r_k$. We will write $\vec{u}=(u_1, \cdots, u_k)$ and $\vec{v} = (v_1, \cdots, v_k)$ for elements $u, v\in H$ written with respect to this decomposition. The notation $I_{\vec{r}}$ will stand for the $k\times k$ diagonal matrix with $i$-th diagonal entry $r_i$ where $r_1, \dots, r_k$ are the invariant factors of $H$. 

For a filling slope $\eta = p\mu+q\lambda_M$, Watson observes that $H_1(M(\eta)) = H_1(M)/(p\ell, pu+qh)$ has a block form presentation matrix 
\begin{equation}
\label{pres} 
	\left( \begin{array}{cc}
	p\ell & 0\\
	p\vec{u}+q\vec{h} & I_{\vec{r}}
	\end{array} \right),
\end{equation}
The statement of the lemma then follows. 

The reader may notice that the proof of Lemma \ref{lem:surg} below is adapted from the argument given in \cite[Theorem 2.4]{LM}. In general, we will use the Smith normal form of the presentation matrix \eqref{pres}. Recall that the Smith normal form is a diagonal matrix $I_{\vec{\delta}}$ where $\delta_i|\delta_{i+1}$. Each $\delta_i = \Gamma_i/\Gamma_{i-1}$, where $\Gamma_i$ is the greatest common divisor of the the $i\times i$ minors of $A$, and $\Gamma_0=1$. 

\begin{lemma}
\label{lem:surg}
Suppose that $H_1(Y) \cong \Z/d_1 \oplus \Z/d_2\oplus \cdots \oplus \Z/d_k \cong H_1(Y')$ where each $d_i$ is an odd square-free integer. If $Y'$ is obtained from $Y$ by a distance one surgery on a knot $K$ in $Y$, then $K$ is null-homologous and the surgery coefficient is $\pm 1$.
\end{lemma}

\begin{proof}
Let $M=Y-N(K)$ and fix $(\mu, \lambda_M)$ as a basis of $H_1(\partial M)$, where $\mu$ is some curve dual to the rational longitude $\lambda_M$. In this basis, filling slopes $\alpha$ and $\beta$ yielding $Y$ and $Y'$, respectively, may be written as $\alpha = p\mu +q\lambda_M$ and $\beta =r\mu +s\lambda_M$ for some integers $p,r> 0$ and $q, s$ relatively prime to $p, r$. 

By assumption $|H_1(M(\alpha))|=|H_1(M(\beta))|$. Applying Lemma \ref{lem:watson}, we have
\[
	 \operatorname{ord}_H i_*(\lambda_M)\cdot|H| \cdot \Delta(\alpha, \lambda_M) = |H_1(M(\alpha))| = |H_1(M(\beta))| = \operatorname{ord}_H i_*(\lambda_M)\cdot|H| \cdot \Delta(\beta, \lambda_M) .
\]
This implies that $p = \Delta(\alpha, \lambda_M) =\Delta(\beta, \lambda_M) =r$. The assumption that $Y'$ is obtained by a distance one surgery along $K$ means that $\Delta(\alpha, \beta)$ is one. Therefore
\[
 1 = \Delta(\alpha, \beta)= p(q-s),
\]
and so $|p|=1$ and $|q-s|=1$. After possibly multiplying by $-1$ to ensure $p=1$, we may write $\alpha = \mu +q\lambda_M$ and $\beta =\mu  +(q\pm1)\lambda_M$. With the change of basis $(\mu, \lambda_M)\mapsto (\mu+q\lambda_M, \lambda_M)$, we can further assume $\alpha=\mu$ and $\beta =\mu \pm \lambda_M$.

By \eqref{eqn:order}, we have that
\[
	|H_1(Y)| = |H_1(M(\alpha))| = \operatorname{ord}_H i_*(\lambda_M)\cdot|H|.
\]
If $\operatorname{ord}_H i_*(\lambda_M)=1$, then $\lambda_M$ bounds in $M$. Because $\lambda_M$ is homologous to the core of the filling torus $N(K)$ in $M(\alpha)$, this also means that $K$ is null-homologous in $Y=M(\alpha)$. Thus we aim to show that $\operatorname{ord}_H i_*(\lambda_M)=1$.

Suppose now it is not the case, so that $\operatorname{ord}_H i_*(\lambda_M)=|\ell|\neq1$. 
Consider the presentation matrices $A$ and $B$ for $H_1(M(\alpha))$ and $H_1(M(\beta))$,
\begin{equation*}
	A=\left( \begin{array}{ccccc}
	\ell &  \\
	u_1 & r_1 \\
	\vdots && \ddots \\
	u_k & & & r_k
	\end{array} \right) \text{ and }
	B=\left( \begin{array}{ccccc}
	\ell &  \\
	u_1\pm h_1 & r_1 \\
	\vdots && \ddots \\
	u_k \pm h_k & & & r_k
	\end{array} \right),
\end{equation*}
where we have multiplied the first column by a unit to ensure that $\ell=\operatorname{ord}_H i_*(\lambda_M)$. 

We claim that each $u_i$ is a multiple of $\gcd(\ell, r_i)$. If not, then there is some $u_i$ for which there exists a prime power $p^j$ that divides both $\ell$ and $r_i$, but not $u_i$. Consider now $\det(A)=|\ell r_1\cdots r_k| = |\delta_1\cdots\delta_{k+1}|$. Since $p|\ell$ and $p|r_i$ we know that $p^t|\det(A)$ for some power $t$. However, if we consider the $k\times k$ minor $A_{1, i+1}$ we see (by, say, cofactor expansion at $u_i$) that $\det(A_{1, i+1})=|r_1\cdots r_{i-1}u_ir_{i+1}\cdots r_k|$ is divisible by at most $p^{t-2}$. Consider now the Smith normal form of $A$. By definition $\delta_{k+1}=\det(A)/\Gamma_k$. In particular, $\Gamma_k$ is divisible by at most $p^{t-2}$. But this implies $p^2|\delta_{k+1}$, and hence some invariant factor is not square-free. Because we have assumed that the $d_i$ are square-free, we must also have that the $\delta_i$ are square free. Hence we have reached a contradiction. 

Notice that the same argument will apply to the presentation matrix $B$ for $H_1(M(\beta))$ to show that $\gcd(\ell, r_i)|(u_i\pm h_i)$ for $i=1, \cdots, k$. Since $\gcd(\ell, r_i)|(u_i\pm h_i)$ and $\gcd(\ell, r_i)|u_i$, we also have $\gcd(\ell, r_i)|h_i$. With this, we may write
\begin{equation*}
	B=\left( \begin{array}{ccccc}
	\ell &  \\
	u_1+ b_1\gcd(\ell, r_1) & r_1 \\
	\vdots && \ddots \\
	u_k+ b_k\gcd(\ell, r_k) & & & r_k
	\end{array} \right),
\end{equation*}
for some integers $b_1, \cdots, b_k$. 

Observe that $h\in H$ is trivial if and only if $\vec{h}$ is in the column space of $I_{\vec{r}}=B_{1,1}$. In particular, since $\ell=\operatorname{ord}_H i_*(\lambda_M)$, we have that $\ell \vec{h}$ is in the column space of $B_{1,1}$. Thus there exist integers $x_i$ so that for all $i=1, \cdots, k$.  
\[
	\ell h_i = \ell b_i \gcd(\ell, r_i) = x_i r_i.
\]
This then implies $r_i /\gcd(\ell, r_i)$ is a divisor of $b_i$, so that $b_i = y_i r_i /\gcd(\ell, r_i)$ for some $y_i$. We can then write $h_i = y_i r_i$. This shows $\vec{h}$ is in the column space of $B_{1,1}$, which implies $i_*(\lambda_M) = (0, h)\in \Z\oplus H$ is trivial.

We have established that $\operatorname{ord}_H i_*(\lambda_M)=1$ and hence $K$ is null-homologous in $Y=M(\alpha)$. Now we take may take the preferred basis $(\mu, \lambda)$ on $H_1(\partial M)$, where $\mu$ is the meridian of $K$ and $\lambda$ is a Seifert longitude. 
The condition that the slope yielding $Y'$ is distance one from the meridian implies the slope must in fact be $\mu\pm\lambda$, i.e. the surgery coefficient is $\pm1$.\qedhere
\end{proof}

For the remainder of the article, we will assume that $K$ is a null-homologous knot in $Y$ with meridian $\mu$ and longitude $\lambda$. We will be primarily concerned with slopes of the form $n\mu+\lambda$, for $n\in\Z$, which are framing curves on the boundary of a neighborhood of $K$. These slopes intersect the meridian once transversely and inherit an orientation from $K$. The notation $Y_n(K)$ will denote the result of integral surgery along $K$, and $K_n$ will denote the core of the surgery $Y_n(K)$.

The set $\spinc(Y)$ is the space of nowhere vanishing vector fields on $Y$ modulo homotopy outside of a ball. There is a non-canonical correspondence $\spinc(Y)\cong H^2(Y)$ \cite{turaev}.  We write $\mft$ to denote an element of $\spinc(Y)$, and write $\relspinc(Y, K)$ to denote the relative $\spinc$ structures on $(M,\partial M)$. Note that $\relspinc(Y, K)$ has an affine identification with $H^2(Y,K)$, which by excision is isomorphic with $H^2(M,\partial M)\cong H_1(M)$. In particular, because $K$ is null-homologous and $H_1(M)\cong H_1(Y) \oplus \Z$, we may label an element $\xi$ of $\relspinc(Y, K)$ by $\xi=(\mft, s)$, where $\mft\in\spinc(Y)$ and $s\in\Z$. The correspondence $\relspinc(Y, K)\equiv H^2(M,\partial M)$ is non-canonical, and the labelling that we adopt follows closely to that of \cite{Gainullin}. When $H_1(Y)$ has odd order, there is a unique self-conjugate $\spinc$ structure $\mft_0$, distinguished from the others by the requirement that it satisfies $c_1(\mft_0) = 0 \in H^2(Y)$ (see for example [Section 2]\cite{MO:Concordance}).

\section{Heegaard Floer invariants and the proof of Theorem \ref{thm:main}}
\label{sec:hf}
\subsection{Heegaard Floer background and the mapping cone formula}
\label{sec:hfback}

We will assume some familiarity with Heegaard Floer homology, referring the reader to \cite{OS:Absolutely, OS:Rational} for more information. We take all Heegaard Floer complexes with coefficients in the field $\F=\Z/2$.  One of the main components of the Heegaard Floer package we will use is the $d$-invariant $d(Y, \mft)$, or \emph{correction term}, which is a rational number associated to a $\spinc$ rational homology sphere $(Y, \mft)$. More specifically, given a rational homology sphere $Y$, the Heegaard Floer module $HF^+(Y)$ splits over $\spinc$ structures, and in each summand we have
\begin{equation}
\label{eq:hfsum}
	HF^+(Y, \mft) = \F[U, U^{-1}]/ U \cdot \F[U] \oplus HF^+_{red}(Y, \mft),
\end{equation}
as $\F[U]$-modules. The first summand is abbreviated $\T_d^+$ and referred to as the \emph{tower}. The second summand is a torsion $\F[U]$-module. An \emph{L-space} is a rational homology sphere whose Heegaard Floer homology $HF^+(Y)$ is a free $\F[U]$-module with rank $|H^2(Y;\Z)|$. That is, the torsion summand in \eqref{eq:hfsum} vanishes, leaving only a tower in each $\spinc$ structure. The $d$-invariant $d:=d(Y,\mft)$ is defined to be the minimal Maslov grading of the tower. The $d$-invariants switch sign under orientation-reversal \cite{OS:Absolutely}.

Associated to $(Y, K)$ and each relative $\spinc$ structure $\xi$ in $\relspinc(Y,K)$ is the knot Floer chain complex $C_\xi = CFK^\infty(Y, K, \xi)$ \cite{Rasmussen:Thesis, OS:KnotInvariants, OS:Rational}. The complex is $\Z$-filtered over $\F[U, U^{-1}]$ with a second filtration induced by the action of the variable $U$. We denote these two filtrations as $(i, j) =$ \emph{(algebraic, Alexander)}. The chain complex also has a homological Maslov grading that we suppress in the notation. Multiplication by $U$ decreases the Maslov grading by two and the Alexander filtration by one, and the action of $PD[\mu]$ on $\xi$ shifts the Alexander filtration by one, i.e. $C_{\xi+PD[\mu]} = C_\xi [(0, -1)]$. Recall that $\mu$ denotes the (class of a) meridian.

As in \cite{OS:Rational}, for each $\xi$ in $\relspinc(Y,K)$ there are complexes $\Aplus_\xi = C_\xi\{\max\{i,j\} \geq 0\}$ and $\Bplus_\xi = C_\xi\{i \geq 0\}$.  The complex $\Bplus_\xi$ is $CF^+(Y,G_{Y,K}(\xi))$. The map $G_{Y,K}:  \relspinc(Y, K) \rightarrow \spinc(Y)$ is defined in \cite[Section 2.2]{OS:Rational}, and sends a relative $\spinc$ structure to a $\spinc$ structure in the target manifold indicated by the subscript. 
The complex $\Aplus_\xi$ represents the Heegaard Floer complex of a large surgery $Y_N(K)$ in a certain $\spinc$ structure, where $Y_N(K)$ is obtained by Dehn surgery along a framing curve $N\mu+\lambda$ with $N>>0$. 
There are analogous complexes in the `hat' version of Heegaard Floer homology. In particular we have $\Ahat_\xi = C_\xi\{\max\{i,j\} = 0\}$ and $\Bhat_\xi = C_\xi\{i = 0\} \cong \CFhat(Y,G_{Y,K}(\xi))$. The complexes are related by chain maps
\begin{equation}
\label{eqn:vhmaps}
\begin{array}{r@{} r@{\qquad} l}
	\vplus_\xi: \Aplus_\xi \to \Bplus_\xi, & &\hplus_\xi : \Aplus_\xi \to \Bplus_{\xi + PD[K_n]},\\ 
	\vhat_\xi: \Ahat_\xi \to \Bhat_\xi, &  &\hhat_\xi : \Ahat_\xi \to \Bhat_{\xi + PD[K_n]},
\end{array}
\end{equation}
where $K_n$ is the push-off of $K$ inside $Y-N(K)$ using framing $n\mu+\lambda$. 
In \cite[Theorem 4.1]{OS:Rational}, it is shown that the maps $\vplus_\xi$ and $\hplus_\xi$ correspond with a negative definite cobordism $W'_N:Y_N(K) \rightarrow Y$ equipped with the $\spinc$ structures $\mfv$ and $\mfh = \mfv+PD[\Fhat]$, respectively, which extend a given $\spinc$ structure on $Y_N(K)$. Here, $[\Fhat]$ generates $H_2(W'_N, Y)$ and is represented by a capped-off Seifert surface for $K$. 

The complexes $\Aplus_\xi$ and $\Bplus_\xi$ each contain a non-torsion summand, i.e. a tower. On homology, each of the maps $\vplus_\xi$ and $\hplus_\xi$ induces an endomorphism of the towers, which is multiplication by $U^V_\xi$ or $U^H_\xi$ for integers $V_\xi\geq 0$ and $H_\xi\geq0$. This defines the knot invariants $V_\xi$ and $H_\xi$, which appeared originally as the \emph{local h-invariants} in \cite{Rasmussen:Thesis}. We remark for later use that when $V_\xi > 0$, the corresponding map $\vhat_\xi$ is identically zero, and similarly $H_\xi > 0$ implies $\hhat_\xi$ is zero.

Recall that because $K$ is null-homologous, $\relspinc(Y,K) \cong \spinc(Y)\oplus \Z$, where the $\Z$-summand is generated by the meridian of $K$ \cite[Lemma 8.1]{Gainullin}. We write $\xi = (\mft, s)$ for $\mft\in \spinc(Y)$ and $s\in\Z$. Fixing $\mft\in\spinc(Y)$, we have the subgroup $\{\mft \} \oplus \Z\cong \Z$. Because the following properties have appeared in various forms throughout the literature, we provide just a sketch of the proof.

\begin{property} 
\label{prop:vh}
Let $K$ be a null-homologous knot in a rational homology sphere $Y$ and let $\mft$ be a self-conjugate $\spinc$ structure on $Y$. 
Then the invariants $V_{(\mft, s)}$ and $H_{(\mft, s)}$ satisfy:
\begin{enumerate}
	\item\label{item-VandHspec} $V_{(\mft, s)} \geq V_{(\mft, s+1)} \geq V_{(\mft, s)} -1$, 
	\item\label{item-V=H} $V_{(\mft, s)} = H_{(\mft, -s)}$,
	\item\label{item-H>V} $H_{(\mft, s)} \geq V_{(\mft, s)}$ for $s\geq0$.
\end{enumerate}
\end{property}
\begin{proof}[(sketch)]
Since  $\relspinc(Y,K) \cong \spinc(Y)\oplus \Z$, we have that $\xi+PD[\mu] = (\mft, s)+PD[\mu] = (\mft, s+1)$. When we fix a self-conjugate $\spinc$ structure on $\mft$ on $Y$, the statements above relating $V_{(\mft, s)}$ and $H_{(\mft, s)}$ are analogous to those for the invariants $V_s$ and $H_s$, with $s\in\Z$, for the case of a knot in an integer homology sphere. In particular, Property \ref{item-VandHspec} follows as a direct analogue of \cite[Property 7.6]{Rasmussen:Thesis} (see also \cite[Lemma 2.4]{NiWu:Cosmetic}). 
Next, because conjugation changes the sign of the first Chern class and the $H^2$-action, we may verify using the formulas of \cite[Section 4.3]{OS:Integer} that $\mfh_{(\mft, s)}$ and $\mfv_{(\mft, s)}$ are conjugate $\spinc$ structures on the four-manifold cobordism $W'_N$. This implies Property \ref{item-V=H}. Finally, \ref{item-VandHspec} and \ref{item-V=H} imply \ref{item-H>V}. See also \cite[Lemmas 2.3-2.5]{HLZ}.
\end{proof}

The proof of Theorem \ref{thm:main} below will require the mapping cone formula for the Heegaard Floer homology of the integral surgery $Y_n(K)$. We give only a terse review here, sending the reader to \os~for details \cite{OS:Integer, OS:Rational}. The generalization of the mapping cone formula specific to rational surgeries on null-homologous knots in L-spaces is also reviewed in \cite{NiWu:Cosmetic, Gainullin}. Note that for our applications, we only require the formulation for integral surgery and `hat' homology.

For each $\mft \in \spinc(Y)$ and $0 \leq i < n$, sum up the complexes $\Ahat_\xi=\Ahat_{(\mft, s)}$ and $\Bhat_\xi=\Bhat_{(\mft, s)}$ into
\[
	\widehat{\mathbb{A}}_{(\mft, i)} = \bigoplus_{s\in\Z} (s, \Ahat_{(\mft, i+ns)} )
%\]
\text{ and } 
%\[
	\widehat{\mathbb{B}}_{(\mft, i)} = \bigoplus_{s\in\Z} (s, \Bhat_{(\mft, i)} ),
\]
where the first component of each tuple records the index of the summand. Using the maps $\vhat_\xi$ and $\hhat_\xi$ from equation \eqref{eqn:vhmaps} above, we define the map $\widehat{\mathbb{D}}_{(\mft, i)}:\widehat{\mathbb{A}}_{(\mft, i)} \rightarrow \widehat{\mathbb{B}}_{(\mft, i)}$ by
\[
	\widehat{\mathbb{D}}_{(\mft, i), n}(s, a_s) = (s, \vhat_{(\mft, i+ns)}(a_s)) + (s+1, \hhat_{(\mft, i+ns)}(a_s)).
\]
The mapping cone complex of $\widehat{\mathbb{D}}_{(\mft, i), n}$ is denoted $\Xhat_{(\mft, i),n}$. Let us omit the surgery coefficient `$n$' and write the summand of the cone corresponding to the equivalence class of $\xi=(\mft, i)$ more concisely as $\Xhat_\xi$. 
\begin{theorem}[\os, \cite{OS:Rational}]\label{thm:mappingcone}
Let $\xi \in \relspinc(Y,K)$.  Then there is a relatively-graded isomorphism of groups  
\begin{equation}\label{eq:mappingcone}
H_*(\Xhat_\xi) \cong \widehat{HF}(Y_n(K), G_{Y_n(K),K_n}(\xi)). \\
\end{equation}
\end{theorem}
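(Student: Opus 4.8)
The plan is to obtain Theorem~\ref{thm:mappingcone} from the integer surgery formula of \os~\cite{OS:Integer}, together with its extension to null-homologous knots in rational homology spheres (treated in \cite{OS:Rational} and reviewed in \cite{NiWu:Cosmetic, Gainullin}). The argument has three moving parts: the large surgery formula, a holomorphic-triangle realization of the maps $\vhat_\xi$ and $\hhat_\xi$, and a truncation of the a priori infinitely generated mapping cone to a finite complex whose homology is then identified with $\CFhat(Y_n(K))$.

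First I would establish the \emph{large surgery formula}: for a framing $N\mu+\lambda$ with $N$ large relative to the genus of $K$ and the complexity of a chosen doubly-pointed Heegaard diagram $(\Sigma,\alpha,\beta,w,z)$ for $(Y,K)$, there is a relatively graded isomorphism $\CFhat(Y_N(K),G_{Y_N(K),K_N}(\xi))\cong H_*(\Ahat_\xi)$, compatible with the identification $\Bhat_\xi\cong\CFhat(Y,G_{Y,K}(\xi))$ recorded above. This is proved by winding the knot-$\beta$-curve $N$ times and observing that for $N\gg 0$ the Whitney disks contributing to the differential on the large-surgery side are precisely those visible in the subquotient complex $C_\xi\{\max\{i,j\}=0\}$; the accounting of $\spinc$ structures is the content of \cite[Section~2.2]{OS:Rational}, and since $K$ is null-homologous it is governed by the splitting $\relspinc(Y,K)\cong\spinc(Y)\oplus\Z$ fixed in Section~\ref{sec:prelims}.

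Next I would realize $\vhat_\xi$ and $\hhat_\xi$ as the two legs of a holomorphic triangle map and assemble the cone. Concretely, one forms a single Heegaard multi-diagram encoding the surgeries along all framings $N\mu+\lambda$ simultaneously and counts pseudo-holomorphic triangles; this produces chain maps $\vhat_\xi\colon\Ahat_\xi\to\Bhat_\xi$ and $\hhat_\xi\colon\Ahat_\xi\to\Bhat_{\xi+PD[K_n]}$ agreeing, up to chain homotopy and the filtered identifications above, with the maps defined via the subquotient complexes, along with the higher homotopies that make $\widehat{\mathbb{D}}_{(\mft,i),n}$ square to zero, so that $\Xhat_{(\mft,i),n}$ is a genuine chain complex. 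The grouping of summands into $\widehat{\mathbb{A}}_{(\mft,i)}$ over $0\le i<n$ mirrors the decomposition $\spinc(Y_n(K))\cong\spinc(Y)\oplus\Z/n$, with $G_{Y_n(K),K_n}$ recording the bijection. One then shows, by analyzing the multi-diagram (equivalently, by iterating the surgery exact triangle on triads of slopes $\mu,\,N\mu+\lambda,\,(N+1)\mu+\lambda$), that $\Xhat_{(\mft,i),n}$ is quasi-isomorphic to the summand of $\CFhat(Y_n(K))$ carrying the $\spinc$ structures in the class of $(\mft,i)$. Finally, since $\vhat_{(\mft,i+ns)}$ is a quasi-isomorphism for $s\gg0$ and $\hhat_{(\mft,i+ns)}$ is a quasi-isomorphism for $s\ll0$ --- equivalently $V_{(\mft,i+ns)}=H_{(\mft,i+ns)}=0$ for $|s|$ large, by the large surgery formula applied in both directions --- the tails of the cone are acyclic and may be cancelled, leaving the finite complex that is matched with the finite-dimensional group $\CFhat(Y_n(K),G_{Y_n(K),K_n}(\xi))$.

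I expect the main obstacle to be the holomorphic-triangle bookkeeping in the middle step rather than any single clean computation: checking admissibility of the multi-diagram, verifying that the triangle-counting maps reproduce $\vhat$ and $\hhat$ on the nose up to homotopy, controlling the higher associativity homotopies, and tracking relative $\spinc$ structures through $G_{Y,K}$ and $G_{Y_n(K),K_n}$. Since this program is carried out in \cite{OS:Integer,OS:Rational} for knots in $S^3$ and extended to null-homologous knots in rational homology spheres in \cite{NiWu:Cosmetic,Gainullin}, and our $K$ is null-homologous, the arguments transfer essentially verbatim; the only point requiring care is the relative $\spinc$ labeling, already arranged in Section~\ref{sec:prelims}.
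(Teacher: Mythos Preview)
The paper does not prove Theorem~\ref{thm:mappingcone}; it is quoted as a result of Ozsv\'ath and Szab\'o from \cite{OS:Rational} (with the hat and null-homologous specializations reviewed in \cite{NiWu:Cosmetic, Gainullin}) and used as a black box in the proof of Theorem~\ref{thm:main}. There is therefore no proof in the paper to compare your proposal against.

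That said, your sketch is a faithful outline of how the result is actually established in \cite{OS:Integer, OS:Rational}: large surgery identifies $\Ahat_\xi$ with the Floer complex of $Y_N(K)$ in the appropriate $\spinc$ structure, the maps $\vhat_\xi$ and $\hhat_\xi$ arise from holomorphic triangle counts (equivalently, the two-handle cobordism maps for $\mfv$ and $\mfh$), iterated exact triangles assemble these into the mapping cone, and truncation of the acyclic tails yields the finite model. Your cautions about admissibility, higher homotopies, and $\spinc$ bookkeeping are exactly the technical content of those papers. For the purposes of this note, a one-line citation is all that is expected; if you wish to include more, a sentence pointing to \cite[Theorem~1.1]{OS:Integer} and its rational/null-homologous extension suffices.
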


Given a $\spinc$ structure in $\spinc(Y_n(K))\cong \spinc(Y) \oplus \Z/n$, there is a unique $\spinc$ structure on $Y$ which extends over the two-handle cobordism $W_n(K): Y \rightarrow Y_n(K)$ and this defines the projection from $\spinc(Y_n(K))$ to $\spinc(Y)$. This projection appears in the following result of Ni and Wu, which will allow us to describe the $d$-invariants of surgeries in terms of the invariants $V_{\mft,i}$, which crucially, are non-negative integers. 

\begin{proposition}[Proposition 1.6 in \cite{NiWu:Cosmetic}]
\label{prop:ni-wu}
Fix an integer $n > 0$ and a self-conjugate $\spinc$ structure $\mft$ on an L-space $Y$. Let $K$ be a null-homologus knot in $Y$.   Then, there exists a bijective correspondence $i \leftrightarrow \mft_i$ between $\Z/n$ and the $\spinc$ structures on $\spinc(Y_n(K))$ that extend $\mft$ over $W_n(K)$ such that
	\begin{equation}
	\label{ni-wu-formula}
	d(Y_n(K),\mft_i) = d(Y,\mft) + d(L(n,1),i) - 2N_{\mft, i} 
	 \end{equation} 
where $N_{\mft,i} = \max\{V_{\mft,i}, V_{\mft,n-i}\}$.  Here, we assume that $0 \leq i < n$.  
\end{proposition}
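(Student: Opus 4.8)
The statement is Ni and Wu's Proposition 1.6, stated there for $K\subset S^3$ and here upgraded to a null-homologous knot $K$ in an L-space $Y$; the plan is to rerun their mapping-cone computation, the only change being that the towers contributed by $Y$ are now based in grading $d(Y,\mft)$ rather than in grading $0$. Concretely, I would work with the $HF^+$ version of the mapping cone formula (\cite{OS:Integer,OS:Rational}; cf. Theorem \ref{thm:mappingcone}): it identifies $HF^+(Y_n(K),\mft_i)$, with its absolute $\mathbb{Q}$-grading, with the homology of the cone $\X^+$ of $\mathbb{D}^+\colon\bigoplus_s\Aplus_{(\mft,i+ns)}\to\bigoplus_s\Bplus_{(\mft,i)}$, and it simultaneously supplies the asserted bijection $i\leftrightarrow\mft_i$ between $\Z/n$ and the $\spinc$ structures on $Y_n(K)$ extending $\mft$ over $W_n(K)$. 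Because $Y$ is an L-space, every $\Bplus_{(\mft,i)}$ has homology a single tower $\T^+$ based in grading $d(Y,\mft)$, with no reduced summand; this is the only place the L-space hypothesis on $Y$ is used, and it is precisely the structural feature Ni and Wu exploit for $S^3$ (there the tower is based in grading $0$).

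Next I would pass to the part of the cone that the $d$-invariant sees. Since $Y_n(K)$ is a rational homology sphere, $HF^\infty(Y_n(K),\mft_i)\cong\F[U,U^{-1}]$, and $d(Y_n(K),\mft_i)$ is the bottom grading of its image in $HF^+$, so the task is to locate the surviving tower of $\X^+$. On towers the maps $\vplus_{(\mft,i+ns)}$ and $\hplus_{(\mft,i+ns)}$ are multiplication by $U^{V_{(\mft,i+ns)}}$ and $U^{H_{(\mft,i+ns)}}$, so the tower part of $\X^+$ is a doubly-infinite zig-zag of copies of $\T^+$; for $s\gg0$ one has $V_{(\mft,i+ns)}=0$ with $\vplus$ an isomorphism of towers, and symmetrically $H_{(\mft,i+ns)}=0$ with $\hplus$ an isomorphism for $s\ll0$, which permits the standard truncation to a finite zig-zag computing the same homology. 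By Property \ref{prop:vh}\eqref{item-VandHspec} the sequences $s\mapsto V_{(\mft,i+ns)}$ and $s\mapsto H_{(\mft,i+ns)}$ are monotone with consecutive gaps in $\{0,1\}$; consequently all interior $U$-powers in the truncated zig-zag telescope, and the largest obstruction surviving on each side is the innermost one — $U^{V_{(\mft,i)}}$ coming from the $s=0$ summand $\Aplus_{(\mft,i)}$ via $\vplus$, and $U^{H_{(\mft,i-n)}}$ coming from the $s=-1$ summand $\Aplus_{(\mft,i-n)}$ via $\hplus$. The bottom of the surviving tower is therefore displaced downward, relative to the unknotted model, by $2\max\{V_{(\mft,i)},H_{(\mft,i-n)}\}$; and since $\mft$ is self-conjugate, Property \ref{prop:vh}\eqref{item-V=H} gives $H_{(\mft,i-n)}=V_{(\mft,n-i)}$, so this displacement equals $2N_{\mft,i}$. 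The unknotted model ($K$ replaced by an unknot bounding a disk in $Y$) is $Y\#L(n,1)$, with correspondence $i\leftrightarrow\mft\#\mfs_i$ induced from the standard labeling of $\spinc(L(n,1))$, and additivity of $d$ under connected sum places its surviving tower in grading $d(Y,\mft)+d(L(n,1),i)$; subtracting $2N_{\mft,i}$ yields the formula. Self-conjugacy of $\mft$ is used both to invoke Property \ref{prop:vh} and to make the truncation symmetric, so that the two surviving obstructions are exactly $V_{(\mft,i)}$ and $V_{(\mft,n-i)}$.

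The main obstacle will be the grading bookkeeping in the truncated zig-zag: installing the correct absolute $\mathbb{Q}$-grading on each tower of $\X^+$ (tracking both the mapping-cone grading shifts and the lens-space term $d(L(n,1),i)$), and then verifying that after truncation the bottom grading of the surviving tower is governed by exactly the two boundary terms $V_{(\mft,i)}$ and $V_{(\mft,n-i)}$ and nothing in between. This is essentially a verbatim rerun of Ni and Wu's computation; the one thing one must confirm for the generalization is that no step of their argument used anything about $S^3$ beyond the identification $HF^+(S^3)=\T^+_0$ and the formal properties of the invariants $V_s$ and $H_s$ — both of which are supplied here by the L-space hypothesis on $Y$ and by Property \ref{prop:vh}.
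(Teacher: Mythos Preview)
The paper does not give its own proof of this proposition: it is stated with attribution to Ni--Wu, and immediately afterward the authors simply remark that ``Proposition \ref{prop:ni-wu} was originally proved for knots in the three-sphere, but it generalizes immediately to the case of a null-homologous knot in an L-space,'' citing \cite{NiWu:Cosmetic} and \cite{LMV}. Your proposal is therefore not competing with a proof in the paper but rather supplying the details the paper omits, and your sketch --- rerunning the Ni--Wu mapping-cone computation with the towers of $\Bplus$ based at $d(Y,\mft)$ instead of $0$, invoking the L-space hypothesis to kill the reduced part, and using Property \ref{prop:vh} to convert $H_{(\mft,i-n)}$ to $V_{(\mft,n-i)}$ --- is correct and is exactly the ``immediate generalization'' the authors have in mind.
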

Proposition \ref{prop:ni-wu} was originally proved for knots in the three-sphere, but it generalizes immediately to the case of a null-homologous knot in an L-space \cite{NiWu:Cosmetic}, which is the version stated here (see also \cite{LMV}). 
The term on the right includes the $d$-invariants of the lens space $L(n, 1)$, which are made explicit in section \ref{sec:lens}. 

The following proposition will be useful in the proofs of the main results. 
A proof of Proposition \ref{prop:selfconj} is given in \cite[Proposition 2.10]{LMV}.
\begin{proposition}[Lidman-Moore-Vazquez \cite{LMV}]
\label{prop:selfconj}
Let $K$ be a null-homologous knot in a $\Z/2$-homology sphere $Y$. Let $\mft$ be the self-conjugate $\spinc$ structure on $Y$, and let $\mft_0$ be the $\spinc$ structure on $Y_n(K)$ described in Proposition \ref{prop:ni-wu}. Then $\mft_0$ is also self-conjugate on $Y_n(K)$.
\end{proposition}

\subsection{Proof of Theorem \ref{thm:main}}
\label{sec:hfproof}

In this section we prove the main result. The requirement that $Y$ and $Y'$ are L-spaces in the statement of Theorem \ref{thm:main} is necessary both to calculate the differences in their $d$-invariants via the surgery formula of Proposition \ref{prop:ni-wu}, and to use the surgery formula directly to determine the value of the invariant $V_{\mft,0}$. The L-space condition will surface again when we apply Theorem \ref{thm:main} to alternating and quasi-alternating knots to obtain Corollary \ref{cor:sigdif} and Theorem \ref{thm:sigdif}. The significance in this context is that quasi-alternating knots have branched double covers that are L-spaces, and there is a connection between the signature of the knot and a certain $d$-invariant of the branched double cover.
\begin{theorem}
\label{thm:main}
Let $Y$ and $Y'$ be L-spaces with $H_1(Y) \cong \Z/d_1 \oplus \Z/d_2 \oplus \cdots \oplus \Z/d_k \cong H_1(Y')$, where each $d_i$ is an odd square-free integer. If $Y'$ is obtained by a distance one surgery along any knot $K$ in $Y$, then
\begin{equation}
\label{eq:ddif}
	| d(Y, \mft) - d(Y', \mft') | = 2 \text{ or } 0
\end{equation}
where $\mft$ and $\mft'$ denote the unique self-conjugate $\spinc$ structures on $Y$ and $Y'$.
\end{theorem}

\begin{proof}
Suppose that the homological condition on $Y$ and $Y'$ is satisfied and that $Y'$ is obtained by a distance one surgery along some knot $K$ in $Y$. By Lemma \ref{lem:surg}, $K$ is null-homologous in $Y$ and the surgery coefficient is $\pm1$. 

Let us first consider when the surgery coefficient is exactly $n=+1$, that is, the filling slope yielding $Y'$ is $\mu+\lambda$.  
Since $K$ is a null-homologous knot in an L-space $Y$ along which there exists an integral surgery to $Y'$, we may apply Proposition \ref{prop:ni-wu}. In particular, fixing the unique self-conjugate $\spinc$ structure $\mft$ on $Y$, there is one $\spinc$-structure $\mft_0$ on $\spinc(Y') \cong \spinc(Y) \cong\Z/m$ that extends $\mft$ over $W_1(K)$. By Proposition \ref{prop:selfconj}, $\mft_0$ is self-conjugate on $Y'$. But there is a unique self-conjugate $\spinc$ structure $\mft'$ in $Y'$. Therefore $\mft_{0}=\mft'$ and by equation \eqref{ni-wu-formula} we have
\begin{equation}
	d(Y',\mft') = d(Y,\mft) + d(L(1,1),0) - 2N_{\mft, 0}.
\end{equation} 
Because $L(1, 1)$ is the three-sphere, the term $d(L(1, 1), 0)$ vanishes, leaving
\begin{equation}
	d(Y,\mft) - d(Y',\mft') = 2N_{\mft, 0},
\end{equation} 
where $N_{\mft,0} = \max\{V_{\mft,0}, V_{\mft,1}\}$. We claim that $V_{\mft, 0}$ is at most one. 

Consider the mapping cone formula for the Heegaard Floer chain complex of $Y'$. Recall that $Y' = Y_n(K)$, where we have assumed that $n = +1$. By assumption, both $Y$ and $Y'$ are L-spaces. Therefore $H_*(\Bhat_\xi) \cong \F$, and $H_*(\Xhat_\xi) \cong \F$ for any $\xi \in\relspinc(Y, K)$. 

By \cite[Proposition 2.1]{OS:Lens}, any sufficiently large surgery $Y_N(K)$ will also be an L-space. Since the Heegaard Floer complex of the large surgery $Y_N(K)$ (in some $\spinc$-structure) is quasi-isomorphic with the complex $\Ahat_\xi$, we have that the homology of every summand $\Ahat_\xi$ is torsion-free. In particular,
\begin{equation}
\label{eq:Axi=T}
H_*(\Ahat_\xi) \cong \F \text{ for all } \xi \in \relspinc(Y,K).
\end{equation}
Thus, the Heegaard Floer homology of $Y_n(K)$ is completely determined by the numbers $V_\xi$ and $H_\xi$ for each $\xi \in \relspinc(Y,K)$ (for this statement, $n$ need not be one). 

Since  the mapping cone splits over $\spinc(Y)$, we may restrict our attention to the unique self-conjugate $\spinc$ structure $\mft$ on $Y$. We have that the `hat version' of the mapping cone formula specialized to $n=+1$ surgery along $K$ and restricted to $\mft$ on $Y$ is given by
\[
\xymatrix{
\ldots \ar[drr] & &   \Ahat_{(\mft , 0)} \ar[d]^{\vhat_{(\mft , 0)}}  \ar[drr]^{\hhat_{(\mft , 0)}} & & \Ahat_{(\mft , 1)} \ar[d]^{\vhat_{(\mft , 1)}} \ar[drr]^{\hhat_{(\mft , 1)}} & &\Ahat_{(\mft , 2)} \ar[d]^{\vhat_{(\mft , 2)}} \ar[drr]^{\hhat_{(\mft , 2)}} & & \ldots \\
\ldots & & \Bhat_{(\mft , 0)} & & \Bhat_{(\mft , 1)} & & \Bhat_{(\mft , 2)} & & \ldots 
}
\]
where we have written $\xi = (\mft , s)$ in the above diagram for clarity.

Now suppose that $V_{(\mft , 0)}\geq 2$. Property \ref{prop:vh}\eqref{item-VandHspec} implies that $V_{(\mft , 1)}\geq 1$. Property \ref{prop:vh}\eqref{item-V=H} implies that $H_{(\mft , 0)}\geq 2$, and Property \ref{prop:vh}\eqref{item-H>V} then implies that $H_{(\mft , 1)}\geq 1$. Together these imply that the maps $\vhat_{(\mft , 0)}, \vhat_{(\mft , 1)},  \hhat_{(\mft , 0)}$ and $\hhat_{(\mft , 1)}$ are zero. In particular, $\Ahat_{(\mft , 0)}$ and $\Ahat_{(\mft , 1)}$ are in the kernel of $(\widehat{\mathbb{D}}_\xi)_*$, and so 
\[
	\text{rank } \ker((\widehat{\mathbb{D}}_\xi)_*) \geq 2. 
\]
By \cite[Lemma 12]{Gainullin:Mapping}, the map induced by $\widehat{\mathbb{D}}_{(\mft, i)}$ on homology is surjective. Thus the long exact triangle
\begin{equation}
\label{eq:triangle}
\xymatrix @M=6pt@C=12pt@R=4pt {
	H_*(\widehat{\mathbb{A}}_\xi) \ar[rr]^{(\widehat{\mathbb{D}}_\xi)_*} & &
	H_*(\widehat{\mathbb{B}}_\xi) \ar[dl] \\
	& H_*(\widehat{\mathbb{X}}_\xi) \ar[ul]
}
\end{equation}
implies that $\ker((\widehat{\mathbb{D}}_\xi)_*) \cong H_*(\Xhat_\xi)$. But now \eqref{eq:Axi=T} and \eqref{eq:triangle} imply that
\[
	\text{rank } \ker((\widehat{\mathbb{D}}_\xi)_*) = \text{rank } H_*(\mathbb{X}^+_\xi) = \text{rank } \widehat{HF}( Y', \mft ')\geq 2,
\] 
which contradicts that $Y'$ is an L-space. Therefore $V_{\mft , 0}$ is at most one. This verifies the claim. 

Finally, since $N_{\mft,0} = \max\{V_{\mft,0}, V_{\mft,1}\} = V_{\mft,0}$, this completes the proof of the theorem in the case that the surgery coefficient is $n=+1$. 

If instead the surgery coefficient is $n= -1$, then we may reverse the roles of $Y$ and $Y'$. In particular, we consider $n = +1$ surgery along a null-homologous knot $K_1$ in $Y'$ yielding the manifold $Y$ (here $K_1$ is the core of the previous surgery).  The above argument applies with the corresponding change in notation. 
\end{proof}

\subsection{Lens spaces and $d$-invariants}
\label{sec:lens}
The $d$-invariants of lens spaces can be computed with the following recursive formula of \os.

\begin{theorem}[Proposition 4.8 in \cite{OS:Absolutely}]
\label{thm:d-lens}
Let $p>q > 0$ be relatively prime integers.  Then, there exists an identification $\spinc(L(p,q)) \cong \Z/p$ such that 
\begin{equation}
\label{eq:d-lens}
d(L(p,q),i) = -\frac{1}{4} + \frac{(2i+1-p-q)^2}{4pq} - d(L(q,r),j)
\end{equation}
for $0 \leq i < p+q$.  Here, $r$ and $j$ are the reductions of $p$ and $i \pmod{q}$ respectively.  
\end{theorem}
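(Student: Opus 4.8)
The plan is to argue by induction on the length of the (negative) continued-fraction expansion of $p/q$, equivalently on the number of steps of the Euclidean algorithm for the pair $(p,q)$, using the surgery exact triangle together with the degree-shift formula for cobordism maps in $HF^+$. Replacing $(p,q)$ by $(q,r)$ with $r = p\bmod q$ strictly decreases $q$, so the recursion terminates at $S^3$, where $d=0$; this is the base of the induction. Throughout one uses that lens spaces are L-spaces, so that $HF^+$ of each is a direct sum of towers $\T^+$ indexed by $\spinc$ structures, and $d(L(p,q),i)$ is the minimal grading of the $i$-th tower.

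For the inductive step, write $p = kq + r$ and present $L(p,q)$, up to orientation, as the boundary of the negative-definite linear plumbing $X$ associated to the continued fraction of $p/q$. Deleting the extremal vertex of $X$ produces a cobordism $W$, built from a single two-handle attachment, between $L(q,r)$ and $-L(p,q)$; the orientation reversal is forced because the sub-plumbing bounds $L(q,q-r) \cong -L(q,r)$, and it is exactly this that will produce the minus sign in front of $d(L(q,r),j)$. One has $\chi(W) = 1$, $b_1(W) = 0$, and, with the plumbing orientation, $\sigma(W) = -1$; moreover $H_2(W)\cong\Z$, generated by a surface obtained by capping off $q$ parallel copies of the handle core, whose self-intersection is $-pq$ (a short determinant computation from $|\det Q_X| = p$ and $|\det Q_{\mathrm{sub}}| = q$). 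Enumerating $\spinc(W)$ and evaluating the square on this rank-one lattice, the characteristic-covector condition gives $c_1(\mathfrak{s})^2 = -\tfrac{(2i+1-p-q)^2}{pq}$, where $i$ records the restriction of $\mathfrak{s}$ to $-L(p,q)$ and $j\equiv i\pmod q$ records its restriction to $L(q,r)$.

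The key analytic input is that $F^+_{W,\mathfrak{s}}\colon HF^+(L(q,r)) \to HF^+(-L(p,q))$ is, on the tower summands, an isomorphism that shifts grading by exactly $\tfrac{c_1(\mathfrak{s})^2 - 2\chi(W) - 3\sigma(W)}{4}$, as in the degree-shift formula of \os. That the map is at least surjective onto the tower is where the surgery exact triangle enters: one places $W$ in the appropriate triad and, since all the manifolds involved have standard $HF^+$, a rank count across the triangle forces the relevant cobordism map to be onto the tower in each $\spinc$ structure. With $\chi(W) = 1$ and $\sigma(W) = -1$, the grading shift equals $\tfrac14 - \tfrac{(2i+1-p-q)^2}{4pq}$, so comparing the bottom gradings of the towers at the two ends of $W$ yields $-d(L(p,q),i) - d(L(q,r),j) = \tfrac14 - \tfrac{(2i+1-p-q)^2}{4pq}$, which rearranges to the asserted recursion. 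That the identity holds for all $0\le i<p+q$ is automatic, since it is forced on each $\spinc$ structure independently of the chosen representative.

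I expect the main difficulty to be twofold. First is the $\spinc$-structure and orientation bookkeeping: one must match the combinatorial index $i\in\Z/p$ with honest $\spinc$ structures so that $c_1(\mathfrak{s})^2$ is exactly $-\tfrac{(2i+1-p-q)^2}{pq}$, check that $\spinc(W)\to\spinc(L(q,r))$ is the reduction $i\mapsto j\pmod q$, and track all orientation conventions---above all the identification $L(q,q-r)\cong -L(q,r)$---so that every sign, in particular the $-d(L(q,r),j)$, is correct. Second, and more essential, is verifying that the cobordism map genuinely realizes the predicted shift as an isomorphism of towers rather than merely a nonzero map; this is the point at which the exact triangle and the rank count are indispensable, and it is the technical heart of the argument.
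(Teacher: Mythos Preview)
The paper does not prove this statement at all: it is quoted verbatim as Proposition~4.8 of \cite{OS:Absolutely} and used as a black box in Corollary~\ref{cor:lens}. So there is no ``paper's own proof'' to compare against.

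That said, your sketch is essentially the argument \os\ give in \cite{OS:Absolutely}. They work with the two-handle cobordism $W$ from $-L(p,q)$ to $-L(q,r)$ (equivalently your $W$ after reversing orientation), compute $c_1(\mathfrak{s})^2$ on the rank-one lattice $H_2(W)$ to obtain exactly $-(2i+1-p-q)^2/pq$, and apply the degree-shift formula with $\chi=1$, $\sigma=-1$. The identification of $\spinc$ structures with $\Z/p$ and the reduction $i\mapsto j\pmod q$ come from labeling characteristic covectors on the plumbing, precisely as you indicate. The one place where your emphasis differs slightly from theirs: rather than invoking the exact triangle plus a rank count to see that $F^+_{W,\mathfrak{s}}$ is an isomorphism of towers, \os\ observe directly that $W$ is negative definite and appeal to their general result that negative-definite cobordism maps carry $HF^\infty$ isomorphically (hence tower-to-tower on $HF^+$). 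Your route via the triangle works too, but the negative-definiteness shortcut is cleaner and avoids the rank bookkeeping you flag as the ``technical heart.''
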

It is well-known (see for example \cite[Section 3.4]{OwensStrle}), that the self-conjugate $\spinc$ structures on the lens space $L(p, q)$ correspond with the set
\begin{equation}
\label{eq:spin-formula}
\Z \cap \{ \frac{p + q - 1}{2} \text{ , } \frac{q - 1}{2} \}.
\end{equation}
Note that when $p$ is odd, there is a unique self conjugate $\spinc$ structure. 

\begin{corollary} 
\label{cor:lens}
Suppose that $m>0$ is a square-free odd integer. There exists a distance one surgery along any knot $K$ in $L(m, 1)$ yielding $-L(m, 1)$ if and only if $m=1$ or $m=5$.
\end{corollary}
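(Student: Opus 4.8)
The plan is to deduce the statement from Theorem \ref{thm:main} together with an explicit evaluation of the $d$-invariant of $L(m,1)$ in its self-conjugate $\spinc$ structure, and then to realize the two surviving cases by hand. For the ``only if'' direction, suppose some knot $K$ in $L(m,1)$ admits a distance one surgery yielding $-L(m,1)$. Both $L(m,1)$ and $-L(m,1)$ are L-spaces with first homology $\Z/m$, and $m$ is odd and square-free, so Theorem \ref{thm:main} applies and gives $|d(L(m,1),\mft)-d(-L(m,1),\mft')|\in\{0,2\}$, where $\mft,\mft'$ are the unique self-conjugate $\spinc$ structures. The remaining work in this direction is purely computational.

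First I would pin down the self-conjugate $\spinc$ structure: by \eqref{eq:spin-formula} with $p=m$ odd and $q=1$, the set $\Z\cap\{m/2,0\}$ equals $\{0\}$, so it is the structure labeled $i=0$. Then I would apply Theorem \ref{thm:d-lens} with $q=1$, where the recursion terminates at once at $L(1,0)=S^3$ with vanishing $d$-invariant, to obtain $d(L(m,1),0)=-\tfrac14+\tfrac{m^2}{4m}=\tfrac{m-1}{4}$. Since $d$-invariants change sign under orientation reversal and the self-conjugate structure is unique, $d(-L(m,1),\mft')=-\tfrac{m-1}{4}$, so the bound from Theorem \ref{thm:main} reads $\tfrac{m-1}{2}\in\{0,2\}$. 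Because $m\geq 1$, this forces $m=1$ (difference $0$) or $m=5$ (difference $2$).

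For the ``if'' direction I would exhibit the surgeries explicitly. When $m=1$ we have $L(1,1)=S^3=-L(1,1)$, and $\pm 1$ surgery on the unknot in $S^3$ yields $S^3$, a distance one surgery of the required type. When $m=5$, I would invoke the chirally cosmetic banding relating $T(2,5)$ and its mirror $T(2,-5)$ from Figure \ref{fig:T25}: by the Montesinos trick this band surgery lifts to a distance one Dehn filling relating the branched double covers $\Sigma(T(2,5))$ and $\Sigma(T(2,-5))$. The branched double cover of $T(2,5)$ is a lens space with first homology $\Z/5$, and passing to the mirror reverses orientation, so these two manifolds are $L(5,1)$ and $-L(5,1)$ up to the (immaterial) choice of orientation convention; this is the desired surgery.

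The main obstacle I anticipate is bookkeeping rather than anything conceptual: one must track the self-conjugate $\spinc$ structure correctly through the recursive $d$-invariant formula and through orientation reversal, and one must match the orientation convention for $\Sigma(T(2,5))$ against the statement's $\pm L(m,1)$, so that the numerical conclusion $\tfrac{m-1}{2}\in\{0,2\}$ in the forward direction and the geometric realization in the $m=5$ case are phrased consistently. Neither point is deep, but both require care.
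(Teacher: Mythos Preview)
Your proposal is correct and follows essentially the same argument as the paper: apply Theorem \ref{thm:main} together with the sign change of $d$-invariants under orientation reversal, identify the self-conjugate $\spinc$ structure as $i=0$ via \eqref{eq:spin-formula}, compute $d(L(m,1),0)=(m-1)/4$ from \eqref{eq:d-lens}, and then realize $m=1$ and $m=5$ explicitly (the paper phrases the $m=1$ case as a Reidemeister-I banding on the unknot rather than $\pm 1$ surgery on the unknot, but these amount to the same thing).
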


\begin{proof}
Suppose first that $m>0$ is a square-free odd integer and there exists a distance one surgery along any knot $K$ in $L(m, 1)$. Observe that $H_1(L(m, 1))\cong \Z/m$. Because the $d$-invariants change sign under orientation-reversal \cite{OS:Absolutely}, Theorem \ref{thm:main} implies that $d(L(m,1),\mft_0) = 0$ or $1$. By equation \eqref{eq:spin-formula}, the $\spinc$ structure $\mft_0$ corresponds with 0, hence by equation \eqref{eq:d-lens} in Theorem \ref{thm:d-lens} we have
\[
	d(L(m,1),0) = \frac{m-1}{4}.
\]
The $d$-invariant is equal $0$ for $m=1$ and equal $1$ for $m=5$. 

For the reverse direction, note that the branched double cover of $T(2, m)$ is the lens space $L(m, 1)$, and that $L(1, 1)$ is the three-sphere. The Montesinos trick implies that any banding along $T(2, m)$ lifts to a distance one Dehn surgery in $L(m, 1)$. In the case $m=5$, a chirally cosmetic banding from the torus knot $T(2, 5)$ to its mirror image $T(2,-5)$ exists \cite{Zekovic} and is pictured in Figure \ref{fig:chiralexamples}. This band move lifts to a distance one filling taking $L(2, 5)$ to $-L(2, 5)$. In the case $m=1$, the relevant banding is induced by a Reidemeister-I move along an unknot (see Figure \ref{reid}).
\end{proof}
\begin{figure}
\includegraphics[width=1in]{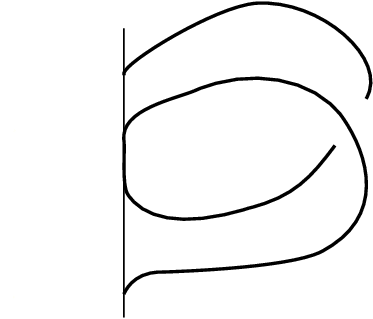}
\caption{A non-coherent banding induced by a Reidemeister-I move.}
\label{reid}
\end{figure}
Corollary \ref{cor:lens} is used in section \ref{sec:band} to show that the only nontrivial torus knot $T(2, m)$ with $m$ square free admitting a chirally cosmetic banding is $T(2, 5)$.

\section{Band surgery and chirally cosmetic bandings}
\label{sec:band}

\subsection{Band surgery} 
\label{subsec:bandsurg}

A band surgery can be described as a two-string tangle replacement as follows. After possibly isotoping $L$ and $L'$, the pairs $(S^3, L)$ and $(S^3, L')$ can be decomposed as 
\[
	(S^3, L)= (B_o, t_o) \cup (B, t) \qquad\text{ and }  \qquad (S^3, L')= (B_o, t_o) \cup (B, t') 
\] 
where $S^3$ is the union of the two three-balls $B$ and $B_o$ glued along their boundary, the sphere $\partial B = \partial B_o$ intersects each of $L$ and $L'$ transversely in four points, and $(B, t)$, $(B, t')$, and $(B_o, t_o)$ are two-string tangles where $t=(B \cap L)$, $t'=(B \cap L')$ and  $t_o = (B_o\cap L) = (B_o\cap L')$. Note that the ``outside" tangle $(B_o, t_o)$ is shared by both of $(S^3, L)$ and $(S^3, L')$. In particular, we may isotope $L$ and $L'$ (possibly complicating the outside tangle) until $(B, t)$ and $(B, t')$ are the specific two-string rational tangles $(B, t)=(\hsmoothing)=(0)$ and $(B, t')=(\smoothing)=(\infty)$, where $(0)$ and $(\infty)$ correspond to the Conway notation. See \cite{Kawauchi, Murasugi:book} for a general discussion of tangles. 

The Montesinos trick \cite{Montesinos} implies that the branched double covers of knots (or links) related by a band surgery are obtained from distance one Dehn fillings of a three-manifold $M$ with torus boundary. This three-manifold $M$ is the double cover of the ball $B_o$ branched over $t_o$. 
Alternatively, $M$ can be described as $\Sigma(L)-N(K)$, where the knot $K$ is the lift in the branched cover of the properly embedded arc arising as the core of the band in $(B, t)$. The filling slope $\alpha$ yielding $\Sigma(L)$ is the meridian of $K$, and the filling slope $\beta$ yielding $\Sigma(L')$ is distance one from $\alpha$, meaning $\alpha$ and $\beta$ intersect geometrically once. 

\subsection{Signature-based obstruction to the existence of a band surgery relating two knots}
\label{sec:cor}
Here we give Theorem \ref{thm:sigdif} as a corollary to Theorem \ref{thm:main}. 
Recall that the double cover $\Sigma(K)$ of the three-sphere branched over a knot $K$ is a rational homology sphere with $H_1(\Sigma(K))$ of odd order. 
\os~\cite{OS:Absolutely} defined an integer-valued knot invariant
\begin{equation}
\label{eqn:delta}
	\delta(K) = 2d( \Sigma(K), \mft_0),
\end{equation}
where $\mft_0$ is the $\spinc$ structure induced by the unique spin structure on $\Sigma(K)$. For alternating knots, and for knots of small crossing number, Manolescu and Owens proved that $\delta(K)$ is determined by the signature $\sigma(K)$ \cite{MO:Concordance}.
\begin{theorem}[Theorems 1.2 and 1.3 in \cite{MO:Concordance}]
\label{thm:delta}
If the knot $K$ is alternating or if $K$ has nine or fewer crossings, then $\delta(K) = -\sigma(K)/2$.
\end{theorem}

In \cite[Theorem 1]{LO} Lisca and Owens generalized Theorem \ref{thm:delta} to quasi-alternating knots. The class $QA$ of quasi-alternating links generalizes alternating links \cite{OS:Branched} (see Definition \ref{def:qa}). Tables of quasi-alternating knots up to 12 crossings can be found in \cite{Jablan}.

\begin{figure}
\centering
\includegraphics[width=0.2\textwidth]{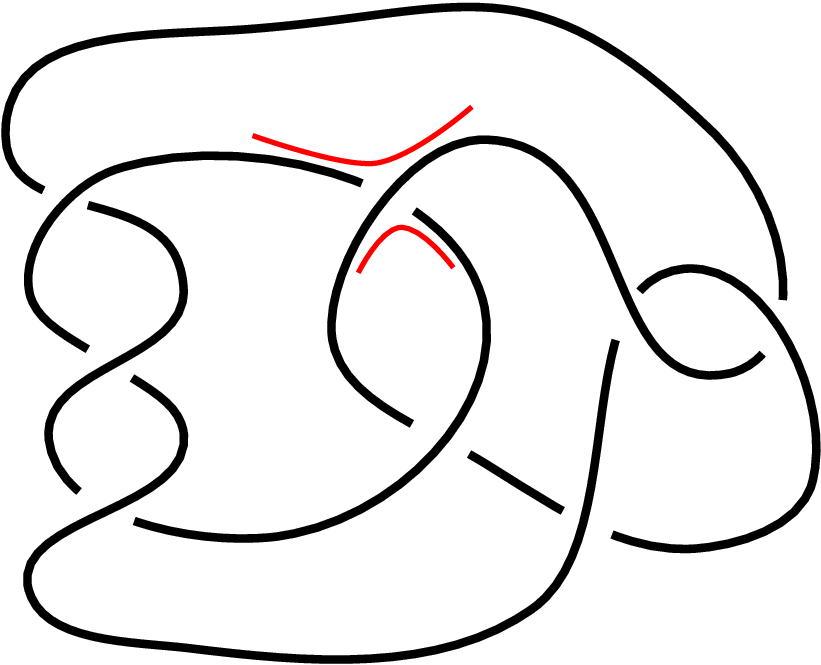}
\caption{A non-coherent band surgery taking $8_{19}$ to $3_1^*$ is obtained by the unoriented skein resolution indicated with the red arcs.}
\label{fig:819to31}
\end{figure}

\begin{definition}
\label{def:qa}
The set $QA$ of quasi-alternating links is the smallest set of links satisfying that the unknot is in $QA$, and that the set is closed under the following operation. If $L$ is a link which admits a diagram with a crossing such that 
		\begin{enumerate}
			\item both resolutions $L_0=(\hsmoothing)$ and $L_\infty=(\smoothing)$ are in $QA$
			\item $\det(L_0) \neq 0$, $\det(L_\infty)\neq 0$, and
			\item $\det(L) = \det(L_0) + \det(L_\infty)$,
		\end{enumerate}
	then $L$ is in $QA$. 
\end{definition}

In sum, the signature of a knot is determined by $\delta(K)= 2d( \Sigma(K), \mft_0)$ for alternating knots, quasi-alternating knots, and all knots of up to nine-crossings.

\begin{corollary}
\label{cor:sigdif}
Let $K$ and $K'$ be a pair of quasi-alternating knots and suppose that $H_1(\Sigma(K)) \cong \Z/d_1 \oplus \Z/d_2 \oplus \cdots \oplus \Z/d_k \cong H_1(\Sigma(K'))$, where each $d_i$ is a square-free integer. If there exists a band surgery relating $K$ and $K'$, then $|\sigma(K) - \sigma(K')|$ is $0$ or $8$. 
\end{corollary}

\begin{proof}
If $K$ and $K'$ are quasi-alternating knots, then their branched double covers $\Sigma(K)$ and $\Sigma(K')$ are L-spaces \cite{OS:Branched} and by assumption, $H_1(\Sigma(K)) \cong \Z/d_1 \oplus \Z/d_2 \oplus \cdots \oplus \Z/d_k \cong H_1(\Sigma(K'))$.  If $K$ and $K'$ differ by a band move, then $\Sigma(K')$ is obtained by a distance one filling along a knot in $\Sigma(K)$ (and vice versa). Therefore,
\begin{eqnarray*}
	& | d(\Sigma(K), \mft_0) - d(\Sigma(K', \mft_0') | &= 2 \text{ or } 0 \\
	  \Rightarrow& |\delta(K) - \delta(K') | &= 4	\text{ or } 0  \\
	  \Rightarrow& |\sigma(K) - \sigma(K') | &= 8	\text{ or } 0  
\end{eqnarray*}
where the first line follows from Theorem \ref{thm:main}, the second is the definition of $\delta$ in equation \eqref{eqn:delta}, and the third line follows from Theorem \ref{thm:delta} and its generalization \cite[Theorem 1]{LO}.
\end{proof} 

Note that Theorem \ref{thm:sigdif} as stated in the introduction is the specific case of Corollary \ref{cor:sigdif} when each $d_i$ is a distinct prime. 

\begin{theorem}
\label{thm:sigdif}
Let $K$ and $K'$ be a pair of quasi-alternating knots and suppose that $\det(K) = m = \det(K')$ for some square-free integer $m$. If there exists a band surgery relating $K$ and $K'$, then $|\sigma(K) - \sigma(K')|$ is $0$ or $8$. 
\end{theorem}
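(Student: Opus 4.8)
The strategy is to pass to double branched covers and re-express everything in terms of Heegaard Floer correction terms. Write $\Sigma(K)$ for the double cover of $S^3$ branched over $K$. A non-coherent band surgery relating $K$ and $K'$ is supported in a ball meeting each of $K$ and $K'$ in a trivial two-string tangle, so by the Montesinos trick it lifts, on double branched covers, to a pair of Dehn fillings of one fixed compact $3$-manifold $M$ with torus boundary, along two slopes of geometric intersection number one. Thus $\Sigma(K)$ and $\Sigma(K')$ are related by a distance one Dehn filling. Since $K$ and $K'$ are quasi-alternating, a theorem of \os~\cite{OS:Branched} shows that $\Sigma(K)$ and $\Sigma(K')$ are L-spaces, and $|H_1(\Sigma(K))| = \det(K) = m = \det(K') = |H_1(\Sigma(K'))|$. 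As the determinant of a knot, $m$ is odd, and by hypothesis it is square-free; so each of $\Sigma(K)$, $\Sigma(K')$ has cyclic first homology of odd square-free order, hence a unique spin structure $\mfs_0$.

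Next I would feed the pair $\Sigma(K)$, $\Sigma(K')$ into the correction-term theorem for distance one Dehn fillings --- the general result advertised in the introduction. Applied here it gives
\[
\bigl| d(\Sigma(K),\mfs_0) - d(\Sigma(K'),\mfs_0) \bigr| \in \{0, 2\}.
\]
The input is a two-handle cobordism $W$ with $b_2(W) = 1$ realizing the distance one filling (and its orientation reversal), together with the correction-term inequalities for the maps it induces; this bounds the difference of $d$-invariants, while the square-free hypothesis on $m$ controls the finitely many $\spinc$ structures on $W$ that restrict to $\mfs_0$ on both ends and forces the difference to be an even integer rather than a half-integer or an odd integer. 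Obtaining the sharp conclusion $\{0,2\}$ rather than a coarse bound is the step I expect to be the crux: it is exactly here that square-freeness does its work, since for a general homology order one can only pin down the difference modulo the torsion class carried across $W$, and the value $1$ is then not excluded.

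Finally I would convert back to signatures via the relation, due to Manolescu and Owens (and going back to \os~for alternating knots) and valid for quasi-alternating knots, between the correction term of the double branched cover and the signature: in a fixed orientation convention, $d(\Sigma(K),\mfs_0) = -\sigma(K)/4$. Substituting,
\[
|\sigma(K) - \sigma(K')| \;=\; 4\,\bigl| d(\Sigma(K),\mfs_0) - d(\Sigma(K'),\mfs_0) \bigr| \;\in\; \{0, 8\},
\]
which is the theorem. (As a consistency check, $\det(K) = \det(K')$ together with Murasugi's congruence $\det(K) \equiv (-1)^{\sigma(K)/2} \pmod 4$ already forces $\sigma(K) \equiv \sigma(K') \pmod 4$; the Heegaard Floer input supplies, beyond this, the bound $|\sigma(K)-\sigma(K')| \le 8$ and --- resting on square-freeness --- the exclusion of the value $4$.)
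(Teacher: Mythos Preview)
Your proposal is correct and follows essentially the same route as the paper: pass to branched double covers via the Montesinos trick, note they are L-spaces with $H_1\cong\Z/m$, apply Theorem~\ref{thm:main} to get $|d(\Sigma(K),\mft_0)-d(\Sigma(K'),\mft_0')|\in\{0,2\}$, and then convert to signatures using $d(\Sigma(K),\mft_0)=-\sigma(K)/4$. Two minor remarks: the extension of the Manolescu--Owens relation $\delta=-\sigma/2$ to quasi-alternating knots is due to Lisca--Owens \cite{LO}, not Manolescu--Owens themselves; and your speculative sketch of how Theorem~\ref{thm:main} is proved (via correction-term inequalities for the two-handle cobordism, with square-freeness controlling the $\spinc$ structures on $W$) is not how the paper argues it --- there, square-freeness is used in Lemma~\ref{lem:surg} to force the surgery knot to be null-homologous with coefficient $\pm1$, after which the Ni--Wu formula (Proposition~\ref{prop:ni-wu}) and the mapping cone give $d(Y,\mft)-d(Y',\mft')=2V_{\mft,0}$ with $V_{\mft,0}\in\{0,1\}$.
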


\begin{corollary}
\label{cor:lowcrossing}
Excluding $8_{19}$, let $K$ and $K'$ be knots of eight or fewer crossings with $\det(K) = m = \det(K')$ for $m$ a square-free integer. If there exists a banding from $K$ to $K'$, then $|\sigma(K) - \sigma(K')| = 0$ or $8$.
\end{corollary}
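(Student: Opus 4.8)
The plan is to deduce Corollary~\ref{cor:lowcrossing} as a mild extension of Theorem~\ref{thm:sigdif}, the only issue being that a knot of eight or fewer crossings need not be quasi-alternating, so Theorem~\ref{thm:main} applies to its branched double cover only after one checks that this cover is still an L-space and that $\delta$ still computes the signature. First I would observe that among knots with at most eight crossings, the only non-quasi-alternating example is $8_{19}$ (equivalently $T(3,4)$), which is exactly why it is excluded; every other knot $K$ with crossing number at most eight is quasi-alternating, hence $\Sigma(K)$ is an L-space by \cite{OS:Branched} and $H_1(\Sigma(K)) \cong \Z/\det(K)$. So once $8_{19}$ is removed, the pair $K, K'$ satisfies precisely the hypotheses of Theorem~\ref{thm:sigdif}.

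The second step is to run the same chain of implications as in the proof of Theorem~\ref{thm:sigdif}. Since $\det(K) = m = \det(K')$ with $m$ square-free, and a band surgery from $K$ to $K'$ lifts via the Montesinos trick to a distance one Dehn filling relating $\Sigma(K)$ and $\Sigma(K')$, Theorem~\ref{thm:main} gives $|d(\Sigma(K),\mft_0) - d(\Sigma(K'),\mft_0')| \in \{0,2\}$. Unwinding the definition $\delta(K) = 2d(\Sigma(K),\mft_0)$ in equation~\eqref{eqn:delta} yields $|\delta(K) - \delta(K')| \in \{0,4\}$. Finally, because every knot of nine or fewer crossings satisfies $\delta(K) = -\sigma(K)/2$ by Theorem~\ref{thm:delta} (part of the Manolescu--Owens result, which covers small crossing number even outside the alternating or quasi-alternating classes), we convert the $\delta$ statement into $|\sigma(K) - \sigma(K')| \in \{0,8\}$.

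The one point requiring slight care — and the only real obstacle — is the status of $8_{19}$: it is one of the two knots of at most eight crossings (the other being $8_{20}$) that is non-alternating, and it is the unique such knot that fails to be quasi-alternating, so Theorem~\ref{thm:main} has no direct bearing on $\Sigma(8_{19})$ (indeed $\Sigma(8_{19})$ is the Brieskorn sphere $\Sigma(2,3,12)$, not an L-space). That is precisely why the hypothesis of the corollary excludes it. By contrast $8_{20}$, although non-alternating, \emph{is} quasi-alternating, so it is covered; and the Manolescu--Owens identity $\delta = -\sigma/2$ still holds for $8_{19}$ itself (it has nine or fewer crossings), so the failure is genuinely at the level of the L-space hypothesis in Theorem~\ref{thm:main}, not at the level of Theorem~\ref{thm:delta}. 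With that caveat noted, the proof is a one-line invocation of Theorem~\ref{thm:sigdif} together with the observation that all knots of at most eight crossings other than $8_{19}$ are quasi-alternating.
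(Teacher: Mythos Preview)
Your approach is essentially identical to the paper's: reduce to Theorem~\ref{thm:sigdif} by noting that every knot with at most eight crossings other than $8_{19}$ is quasi-alternating, then invoke Theorem~\ref{thm:delta} (which covers all knots of nine or fewer crossings regardless of quasi-alternating status). Two small factual slips worth correcting: there are \emph{three} non-alternating knots with at most eight crossings, namely $8_{19}$, $8_{20}$, and $8_{21}$ (you omitted $8_{21}$, though your main claim that $8_{19}$ is the unique non-quasi-alternating one remains correct, since both $8_{20}$ and $8_{21}$ are quasi-alternating); and the branched double cover of $8_{19}=T(3,4)$ is not $\Sigma(2,3,12)$.
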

\begin{proof}
All knots of up to eight crosings are alternating with the exception of $8_{19}$, $8_{20}$ and $8_{21}$. Of these, $8_{20}$ and $8_{21}$ are quasi-alternating. So excluding $8_{19}$, all knots of up to eight crossings are quasi-alternating. The statement follows from Theorem \ref{thm:delta}. 
\end{proof}

\begin{table}
\caption{Nomenclature conversion chart. Here $K^*$ denotes the mirror of $K$. Our convention on nomenclature for mirroring agrees with that of \cite{Witte}, and the slogan that ``positive knots have negative signature" \cite{Traczyk, CochranGompf}. In the last line $n$ can be any odd integer.}
\centering
  \begin{tabular}{| r | r | r | r |} 
    \hline
	Writhe-guided & Rolfsen & Knotplot &  \\
	\hline
	$3_1$ & $3_1^*$ & $3_1$ & $T(2, 3)$ \\
	$5_1$ & $5_1^*$ & $5_1$ & $T(2, 5)$ \\
	$5_2$ & $5_2^*$ & $5_2$ &\\
	$6_2$ & $6_2^*$ & $6_2$ &\\
	$7_1$ & $7_1^*$ & $7_1$ & $T(2, 7)$ \\	
	$8_8$ & $8_8$ & $8_8^*$ &\\
	$8_{19}$ & $8_{19}$ & $8_{19}^*$ & \\
	$8_{20}$ & $8_{20}^*$ & $8_{20}$ &\\
	$n_1$ & $n_1^*$ & $n_1$ & $T(2, n)$ \\	
	\hline 	
  \end{tabular}

\label{table:nomenclature}
\end{table}

\begin{example}
Recall that the branched double cover of any alternating or quasi-alternating knot is an L-space. The knot $8_{19}$ is not quasi-alternating (in fact, it is homology-thick) and its branched double cover is not an L-space, hence it is excluded from Corollary \ref{cor:lowcrossing}. 
However, a band surgery relates $3_1^*$ and $8_{19}$. (See Figure \ref{fig:819to31}.) Here, $\det(3_1^*) = 3 = \det(8_{19})$ is square-free, and $|\sigma(3_1^*) - \sigma(8_{19})|=|2-(-6)| =8$. It is unknown whether the criterion of Corollary \ref{cor:lowcrossing} always holds for $8_{19}$.
\end{example}

\begin{example}
\label{example:knots}
	Consider the knots $6_2$ and $7_2$, for which $\det(6_2)= 11=\det(7_2)$ and $\sigma(6_2)= -2 = \sigma(7_2)$.   A single band surgery relates $6_2$ and $7_2$ (and hence $6_2^*$ and $7_2^*$) \cite{Zekovic}. Theorem \ref{thm:sigdif} implies there is no band surgery relating $6_2$ and $7_2^*$. Similarly, consider $7_1$ and $5_2$. In this case, $\det(7_1) = \det(5_2) = 7$, while $\sigma(7_1) = -6$ and $\sigma(5_2) = -2$, so there cannot be a band move relating $7_1$ and $5_2$. However, Figure \ref{fig:567} illustrates the existence of a band surgery relating $7_1$ and $5_2^*$ \cite{Zekovic}. For the knots in this article, Table \ref{table:nomenclature} provides a nomenclature conversion to Rolfsen \cite{Rolfsen}.
\end{example}

\subsection{Chirally cosmetic bandings}
\label{sec:chiral}
A \emph{chirally cosmetic} banding refers to a non-coherent band surgery taking a knot to its mirror image. The results in the previous section immediately imply the following corollary.

\begin{corollary}
\label{cor:torus}
The only nontrivial torus knot $T(2, m)$, with $m$ square free, admitting a chirally cosmetic banding is $T(2, 5)$.
\end{corollary}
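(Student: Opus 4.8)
The plan is to show that Corollary \ref{cor:torus} is an immediate consequence of Corollary \ref{cor:lens} together with the Montesinos trick. The key observation is that a chirally cosmetic banding on the torus knot $T(2,m)$ is, at the level of branched double covers, exactly a distance one surgery relating $\Sigma(T(2,m))$ to its orientation reversal. Recall that the branched double cover of $T(2,m)$ is the lens space $L(m,1)$, and that reversing the orientation of $T(2,m)$ corresponds to passing to the mirror, whose branched double cover is $-L(m,1)$.

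First I would spell out the two directions. For the forward direction, suppose $T(2,m)$ with $m$ square-free admits a chirally cosmetic banding, i.e.\ a non-coherent band surgery from $T(2,m)$ to $T(2,-m)$. By the Montesinos trick (as recalled in section \ref{sec:prelims}), this band surgery lifts to a distance one Dehn surgery along some knot $K$ in $\Sigma(T(2,m)) = L(m,1)$, with resulting manifold $\Sigma(T(2,-m)) = -L(m,1)$. Since $m$ is square-free and odd (the determinant of any knot is odd), Corollary \ref{cor:lens} applies directly and forces $m = 1$ or $m = 5$. Since $T(2,1)$ is the unknot, the only nontrivial possibility is $T(2,5)$. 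For the reverse direction, the case $m=5$ is realized by Zekovi\'c's explicit band move pictured in Figure \ref{fig:T25}, as already noted in the proof of Corollary \ref{cor:lens}.

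I expect there to be essentially no obstacle here, since all the real work is done in Corollary \ref{cor:lens} (whose proof in turn rests on Theorem \ref{thm:main} and the lens space $d$-invariant formula of Theorem \ref{thm:d-lens}). The only points requiring a sentence of care are: (i) confirming that $\det(T(2,m)) = m$ is odd and that the square-free hypothesis on $m$ in the statement matches the hypothesis of Corollary \ref{cor:lens}; and (ii) noting that the branched double cover of $T(2,m)$ is $L(m,1)$ and that mirroring reverses orientation of the branched double cover, so that a chirally cosmetic banding is precisely a distance one surgery from $L(m,1)$ to $-L(m,1)$. Both are standard.

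The proof I would write is therefore short:
\begin{proof}
The torus knot $T(2,m)$ has determinant $m$, and its branched double cover is the lens space $L(m,1)$; its mirror image $T(2,-m)$ has branched double cover $-L(m,1)$. A chirally cosmetic banding from $T(2,m)$ to $T(2,-m)$ lifts, via the Montesinos trick, to a distance one Dehn surgery along a knot $K$ in $L(m,1)$ yielding $-L(m,1)$. Since $m$ is square-free and odd, Corollary \ref{cor:lens} implies $m=1$ or $m=5$. As $T(2,1)$ is the unknot, the only nontrivial possibility is $T(2,5)$, which does admit such a banding by \cite{Zekovic} (see Figure \ref{fig:T25}).
\end{proof}
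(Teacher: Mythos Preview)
Your proof is correct and follows essentially the same approach as the paper: invoke Corollary~\ref{cor:lens} via the Montesinos trick, using that $\Sigma(T(2,m)) = L(m,1)$ and $\Sigma(T(2,-m)) = -L(m,1)$, to conclude $m \in \{1,5\}$, and then cite Zekovi\'c for existence when $m=5$. The paper additionally remarks that one can alternatively deduce the result directly from Theorem~\ref{thm:sigdif} using $\sigma(T(2,m)) = 1-m$, but this is offered only as a second route.
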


\begin{proof} The statement follows from Corollary \ref{cor:lens} because the branched double cover of $T(2, m)$ is the lens space $L(m, 1)$. Alternatively, we may use Theorem \ref{thm:sigdif}. Without loss of generality, let $m>0$. The torus knot $T(2, m)$ has determinant $m$ and signature $1-m$, whereas the mirror $T(2, -m)$ has determinant $m$ and signature $-1+m$. The absolute value of the difference in signature between $T(2, m)$ and $T(2, -m)$ is $0$ when $m=1$ and is $8$ when $m=5$. The case of $m=1$ is the unknot. The chirally cosmetic banding for the case $m=5$ is pictured in Figure \ref{fig:chiralexamples}. 
\end{proof}

\begin{figure}
\centering
\subfloat[The knots $8_8$ and $8_{20}$.]{\label{fig:symmetricunions} \includegraphics[width=5.2cm]{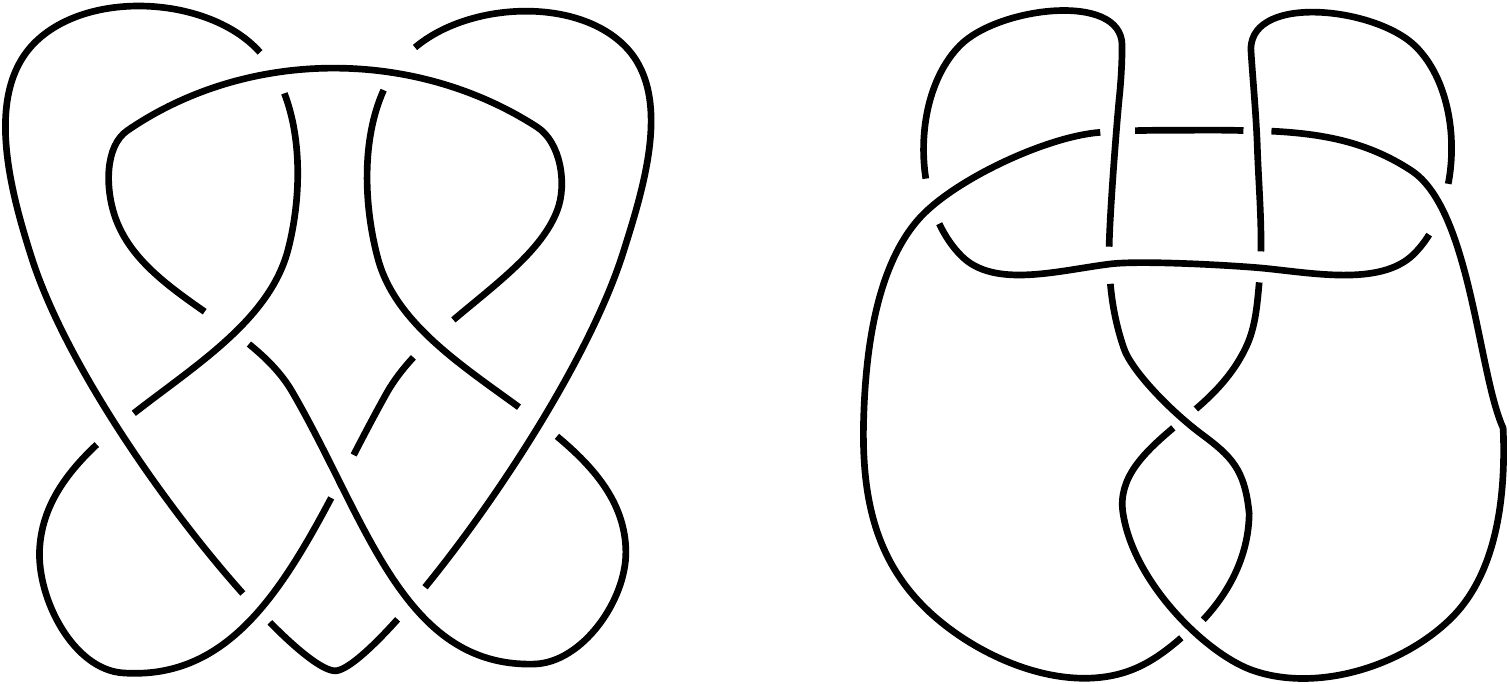}}
\qquad
\subfloat[Chirally cosmetic banding along $8_8^*$ and $8_{20}$.]{\label{fig:chiralbanding} \includegraphics[width=1.4in]{8_8swithband-long.pdf} \qquad \includegraphics[width=0.8in]{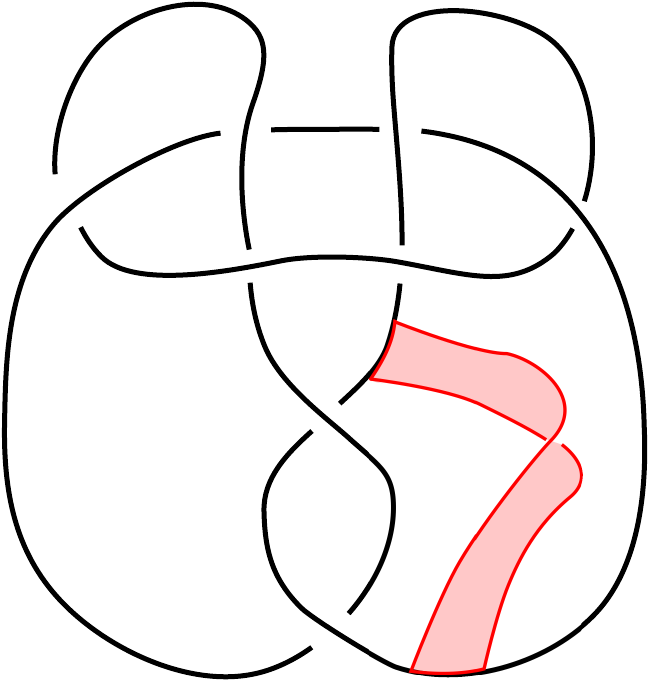}}%
\caption{(A) The knots $8_8$ and $8_{20}$ are symmetric unions. 
(B) Chirally cosmetic bandings relating the pair $8_8^*$ and $8_8 $ (left) and the pair $8_{20}$ and $8^*_{20}$ (right). The banding for $8_8^*$ was discovered via computer simulation. The banding for $8_{20}$ induces a ``4-move."}
\label{fig:symmetricunionsandbands}
\end{figure}

The knot $K$ in $L(5, 1)$ along which there exists a distance one filling yielding $-L(5, 1)$ descends under the covering involution to the core of the band move from $T(2, 5)=5_1$ to $T(2, -5)=5_1^*$. As observed in \cite{IJ}, the complement of this knot is the hyperbolic knot complement known as the ``figure-eight
sibling" and is well known to be amphichieral \cite{MP, Weeks}. The special symmetries of this manifold suggest an explanation for the uniqueness of $T(2, 5)$; see \cite{IJ} for a discussion of this perspective. 

In addition to $5_1$, several constructions of knots admitting chirally cosmetic bandings are described in \cite{IJ}. These include: the knot $9_{27}$, which has determinant 49; Whitehead doubles of achiral knots (i.e. certain satellites), which have determinant 1; and a general construction of certain symmetric unions. A symmetric union is a connected sum of a knot and its mirror image, modified by a tangle replacement such that the resulting diagram admits an axis of mirror symmetry (see Figure \ref{fig:symmetricunions}). The determinant of a symmetric union is always a square \cite{KT, Moore}. Indeed, excluding $5_1$, all of the examples of knots admitting chirally cosmetic bandings presented in \cite{IJ} have determinant a square.

\textbf{Numerical exploration of chirally cosmetic bandings.} Corollary \ref{cor:torus} and the observation that most known constructions of knots admitting chirally cosmetic bandings are symmetric unions suggest that this phenomenon is uncommon. 
To search for any further chirally cosmetic bandings amongst low crossing knots, we performed computer simulations (described at the end of this section) of non-coherent band surgery along knots. 
In brief, each knot type $K$ was embedded in the cubic lattice. 
The embeddings were randomized using a Monte Carlo algorithm, where 
$6\times 10^6$ polygons were sampled for each chiral pair and a non-coherent band surgery was performed on each polygon.

\begin{table}[t]

\caption{
\textbf{Chirally cosmetic bandings observed in numerical experiments.} 
 Here we report the number of chirally cosmetic bandings for knots of up to eight crossings observed by computer simulation, where $6\times 10^6$ total band moves were performed on each chiral pair. Because the BFACF algorithm may change the length of the polygon, the last column indicates the length range observed in sampled polygons.
}
\begin{center}
  \begin{tabular}{ r  r  c  }
    \hline
    \emph{Chiral pairs} & 
    \emph{Number observed} & [$\ell_{min}, \ell_{max}$]  \\
    \hline
    $(5_1, 5_1^*)$ & 188 & $[42, 2016]$ \\
    $(8_8, 8_8^*)$ & 1 & $[100, 2894]$ \\
    $(8_{20}, 8_{20}^*)$ & 2677 & $[68, 2370]$ \\    
    \hline
  \end{tabular}
\end{center}

\label{table:chiral}
\end{table}

We found chirally cosmetic bandings to be exceedingly rare. 
Chirally cosmetic bandings were observed in the numerical experiments for only three knot types. 
The numbers of bandings relating each of these three knots with its mirror image (or vice versa) is given in Table \ref{table:chiral}.

The band move for $5_1$ was previously reported in \cite{Zekovic} and is shown in Figure \ref{fig:chiralexamples}. 
Only a single band move was observed for the pair $8_8$ and $8_8^*$. 
An isotopy of this banding, discovered only by simulation, is shown in Figure \ref{fig:chiralbanding} (left).   
A banding for $8_{20}$ can more easily be found by hand, as in Figure \ref{fig:chiralbanding} (right).  
The banding pictured is a ``4-move" in a symmetric union diagram, where a 4-move refers to the replacement of a positive 2-twist with a negative 2-twist. 
Although $8_8$ is known to admit a symmetric union presentation \cite{LAMM} (see Figure \ref{fig:symmetricunions}), it is not known to the authors whether a 4-move in such a diagram relates $8_8$ to its mirror. 
See \cite{IJ} for a definition of 4-moves.

These observations prompt the following question:
\begin{question}
\label{question}
Suppose that a pair of knots $K$ and $K'$ are related by band surgery. 
What is the likelihood that $K$ and $K'$ are of different chirality?
\end{question}
When $K$ is a chiral knot and $K'$ is the mirror of $K$, it is clear what we mean by \emph{different chirality}. 
In a forthcoming numerical study propose a method to quantify what it means for arbitrary knots to have different chirality. We expand on the preliminary simulations conducted here and report on the chirality trends with respect to band surgery operations along cubic lattice knots.

\textbf{Numerical methods.} In order to identify candidate pairs $(K, K^*)$ for chirally cosmetic bandings, we systematically performed band surgery operations on all nontrivial prime chiral knots with 8 or fewer crossings. Independent simulations were performed for each member of a chiral pair ($K, K^*)$. 
Because we excluded fully amphichiral and negative amphichiral knots, a total of 28 knot types were considered.
The computer simulations were conducted by adapting software previously developed in our group, the implementation and methods of which are detailed in \cite{Stolz}.

The details of the simulations are as follows. 
For each knot $K$, we used the BFACF algorithm \cite{MadrasSlade} to sample the space of lattice embeddings of $K$. 
Here a cubic lattice representative of a knot $K$ is an embedding of $K$ contained in the cubic lattice $(\mathbb{R}\times \Z\times \Z) \cup (\Z\times\mathbb{R}\times \Z)\cup (\Z\times\Z\times\mathbb{R})$, where the end points of the line segments are contained in the integer lattice $\Z^3$. 
The BFACF algorithm is a dynamic Monte Carlo method that samples the space of cubic lattice polygons of a specific knot type. 
In each step of the BFACF Markov chain, the algorithm attempts to locally perturb a polygonal chain as pictured in Figure \ref{fig:bfacfmoves}, at a randomly chosen location on edges in the polygon. 
Note that this can change the length of the polygon by $-2, 0$, or $2$. 
Janse van Rensburg and Whittington showed that the ergodicity classes are the knot types \cite{RensburgWhittington}, making this a good method for generating large ensembles of lattice embeddings of a specific knot type. 
The algorithm depends on one adjustable parameter $z$. 
For any given starting knot type $K$, fixing $z$ yields uniform sampling of polygons of type $K$ and fixed mean length.

\begin{figure}
	\includegraphics[width=5in]{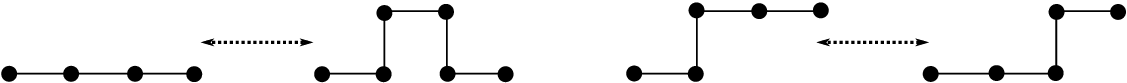}
	\caption{The elementary moves of the BFACF algorithm.}
	\label{fig:bfacfmoves}
\end{figure}

Our implementation is based on the algorithm described in \cite{MadrasSlade} and runs in constant time on the number of edges in the polygons. 
In particular, we use the BFACF algorithm in conjunction with a multiple Markov chain procedure to generate a large, random ensemble of polygonal representatives of each fixed knot type $K$. 
From this much larger ensemble, we uniformly sample a smaller subset of $3\times 10^6$ polygons. 
Amongst the sample set, we search for candidate banding locations by finding parallel unit edges at distance one, pick one such site at random and perform one non-coherent band surgery operation. 
Locally, the operation is as illustrated in Figure \ref{fig:567}. 
We identify the knot type of the resulting knot using an implementation of the HOMFLY-PT polynomial \cite{F, PT} based on published algorithms \cite{G, J}. 
In cases where the HOMFLY-PT may lead to ambiguous knot identification, we apply KnotFinder \cite{indiana}. 
The outcome is a directed, weighted graph where the nodes are the knot types and the edges indicate transitions between knot types with corresponding transition probabilities. 

It is worth remarking that the sample sets of conformations generated for a knot $K$ and for its mirror image $K^*$ are not related by mirroring in the cubic lattice, as each set is a uniform random sample of a large set of polygons generated to represent each knot. 
For example, a non-coherent banding was performed for each of $3\times 10^6$ lattice embeddings of $8_8$ and $3\times 10^6$ lattice embeddings of $8_8^*$. 
Out of these bandings, only one was a chirally cosmetic banding relating $8_8^*$ to $8_8$. 
Here, because the experiments were designed to find rare events, the lengths of the polygons was allowed to vary widely, as can be seen in Table \ref{table:chiral}. 
The BFACF algorithm guarantees uniform random sampling for each fixed knot type and fixed length. 
If the length is fixed in the simulations, then when the number of iterations is sufficiently large we expect the behavior of a knot to be equal to that of its mirror image, within a small margin of error. 
Of course, every time a banding is identified taking $K$ to $K^*$, after mirroring there exists a cosmetic banding relating $K^*$ to $K$. 
However, for the reasons stated above, the number of transitions observed in simulation for a knot and its mirror are not identical.

\subsection*{Acknowledgements}
Many of the ideas in the article originated in discussions with Tye Lidman, who we thank for his willingness to discuss the mapping cone construction. We thank Cameron Gordon for helpful discussions and Michelle Flanner for preliminary computational data. 
We are grateful to In Dae Jong and Kazuhiro Ichihara for alerting us to an early error in knot identification pertaining to the skein equivalent knots $8_8$ and $10_{129}$. AM and MV were partially supported by DMS-1716987. MV was also partially supported by DMS-1817156.

\bibliographystyle{alpha}
\bibliography{bibliography}

\end{document}